\theoremstyle{plain}
\newtheorem{thm}{Theorem}
\newtheorem{prop}{Proposition}[section]
\newtheorem{lem}[prop]{Lemma}
\newtheorem{cor}[prop]{Corollary}
\newtheorem{rmk}[prop]{Remark}
\newcommand {\R} {\mathbb{R}} \newcommand {\Z} {\mathbb{Z}}
\newcommand {\T} {\mathbb{T}} \newcommand {\N} {\mathbb{N}}
\newcommand {\p} {\partial}
\DeclareMathOperator {\dist} {dist}
\DeclareMathOperator{\F} {\mathcal{F}}
\title{On the Energy Scaling Behaviour of a Singularly Perturbed Tartar Square}
\author{Angkana Rüland}
\address{Institut f\"ur Angewandte Mathematik, Ruprecht-Karls-Universit\"at Heidelberg, Im Neuenheimer Feld 205, 69120 Heidelberg, Germany}
\email{Angkana.Rueland@uni-heidelberg.de}
\author{Antonio Tribuzio}
\address{Institut f\"ur Angewandte Mathematik, Ruprecht-Karls-Universit\"at Heidelberg, Im Neuenheimer Feld 205, 69120 Heidelberg, Germany}
\email{Antonio.Tribuzio@uni-heidelberg.de}
\begin{document}
\begin{abstract}
In this article we derive an (almost) optimal scaling law for a singular perturbation problem associated with the Tartar square. As in \cite{W97, C99}, our upper bound quantifies the well-known construction which is used in the literature to prove flexibility of the Tartar square in the sense of flexibility of approximate solutions to the differential inclusion. The main novelty of our article is the derivation of an (up to logarithmic powers matching) ansatz free lower bound which relies on a bootstrap argument in Fourier space and is related to a quantification of the interaction of a nonlinearity and a negative Sobolev space in the form of ``a chain rule in a negative Sobolev space''. Both the lower and the upper bound arguments give evidence of the involved ``infinite order of lamination''.
\end{abstract}
\maketitle

\section{Introduction}
In this article we study a singularly perturbed variational problem for a differential inclusion associated with the Tartar square. The Tartar square, $T_4$, and more generally its siblings the $T_N$-structures, are well-known sets in matrix space with important ramifications in the calculus of variations and the theoretical study of differential inclusions \cite{S93, MS, Msyl, MS03, CK00, K1, FS08}, the theory of partial differential equations, in particular as building blocks for convex integration schemes, ranging from elliptic and parabolic equations \cite{MS1,MRV05,S07} to equations of fluid dynamics \cite{DLS09,CFG11,S11}, and with various consequences for  applications, for instance for the analysis of certain phase transformations \cite{CS13,CS15} and related differential inclusions \cite{P10, MP98}. For further applications and implications we refer to the lecture notes and survey articles \cite{M1, Ri18, KMS03, K1}.

\subsection{The Tartar square and the ``stress-free'' setting}
Let us recall the ``stress-free" set-up and some properties of our problem.
The Tartar square -- which was introduced in several places in the literature \cite{Sch75, AH86, NM91, CT93, T93} (see also the survey articles from above) -- is the following set $\mathcal{K}\subset \R^{2\times 2}$:
\begin{equation}\label{sf-states}
\mathcal{K}:=\Big\{A_1,A_2,A_3,A_4\Big\}
\quad \text{with} \quad
A_1=\begin{pmatrix} 
-1 & 0\\
0 & -3
\end{pmatrix},\,
A_2=\begin{pmatrix} 
-3 & 0\\
0 & 1
\end{pmatrix},\,
A_3=-A_1,\,
A_4=-A_2.
\end{equation}
It displays a striking \emph{dichotomy between rigidity and flexibility} for the associated differential inclusion. On the one hand, it is easily shown (for convenience, a proof is recalled in Section \ref{sec:aux}) that any solution $u\in W^{1,\infty}(\Omega)$ to the differential inclusion
\begin{align}
\label{eq:diff_incl}
\nabla u \in \mathcal{K} \mbox{ a.e. in } \Omega
\end{align} 
is \emph{rigid} in the sense that any solution to \eqref{eq:diff_incl} is an affine function whose gradient is equal to one of the four matrices $A_1,\dots,A_4$. On the other hand, the Tartar square is \emph{flexible} on the level of approximate solutions: Indeed, it is possible to find sequences $(u_j)_{j\in \N}\in W^{1,\infty}(\Omega)$ such that $\dist(\nabla u_j, \mathcal{K}) \rightarrow 0$ in measure and such that no subsequence  of $(\nabla u_j)_{j\in \N}$ converges in measure (to a constant gradient in $\mathcal{K}$; see Section \ref{sec:upper} as well as \cite[Chapter 2.5]{M1} and \cite{W97,C99} for qualitative and quantitative versions of this construction). Moreover, it is known that arbitrarily small perturbations of the Tartar square enjoy even stronger flexibility in the sense that if $\mathcal{K}_{\delta}\subset \R^{2\times 2}$ is an arbitrarily small, open neighbourhood of $\mathcal{K}$ in $\R^{2\times 2}$, then there are infinitely many, non-affine solutions to the differential inclusion $\nabla u \in \mathcal{K}_{\delta}$ which can be obtained by the method of convex integration \cite{MS}.
These rigidity and flexibility aspects are mirrored in the algebraic properties of the set $\mathcal{K}$: On the one hand, the set $\mathcal{K}$ does not have any rank-one connections, i.e. for any $i,j \in \{1,\dots,4\}$ with $i \neq j$ it holds that $rk(A_i-A_j)=2>1$. This excludes ``trivial solutions'' to \eqref{eq:diff_incl} such as simple laminates. It furthermore directly implies that the lamination convex hull $\mathcal{K}^{lc}$ of $\mathcal{K}$ is trivial. On the other hand, however, the rank-one convex hull is non-trivial: 
\begin{align*}
\begin{split}
\mathcal{K}^{rc} &= \text{conv}(\{ P_1, P_2, P_3, P_4\})\cup \text{conv}(\{A_1,P_1\})\cup \text{conv}(\{A_2,P_2\})\\
& \quad \cup \text{conv}(\{A_3,P_3\})\cup \text{conv}(\{A_4,P_4\}), 
\end{split}
\end{align*}
where $\text{conv}(\cdot)$ denotes the convex hull and
\begin{align}
\label{eq:P}
\begin{split}
P_1 = \begin{pmatrix} -1 & 0 \\ 0 & -1 \end{pmatrix}, P_2 = \begin{pmatrix} -1 & 0 \\ 0 & 1 \end{pmatrix}, P_3 = \begin{pmatrix} 1 & 0 \\ 0 & 1 \end{pmatrix}, P_4 = \begin{pmatrix} 1 & 0 \\ 0 & -1 \end{pmatrix}.
\end{split}
\end{align}
The set $\mathcal{K}^{rc} = \mathcal{K}^{qc}$ is obtained by laminates of infinite order. 
Thus, the outlined properties make the Tartar square a prototypical model problem for studying more detailed properties of the dichotomy between rigidity and flexibility.

\subsection{The singularly perturbed problem and a scaling law}
Motivated by the long-term goal of understanding the described dichotomy and related dichotomies in the study of shape-memory alloys more precisely and quantitatively \cite{D, MS, K, CDK, DM1, DM2,DMP10, R16,RZZ16, RZZ18, DPR20}, and inspired by the observation in \cite{RTZ19} that the scaling behaviour of associated singularly perturbed problems give certain upper bounds on possible regularities of wild convex integration solutions, we here study the minimal energy scaling of a singularly perturbed Tartar square. Let us emphasize that in this context upper bound constructions are well-known and had earlier been quantified in \cite{W97} and also in \cite{C99}. We repeat these estimates in Section \ref{sec:upper} for completeness. The main novelty of our work consists in proving (essentially) matching lower scaling bounds.

Let us outline the setting of this. We begin by noting that the differential inclusion \eqref{eq:diff_incl} for the Tartar square can be rewritten in terms of characteristic functions indicating the ``phase'' the gradient is in
\begin{align*}
\nabla u = \begin{pmatrix} 
-\chi_1 +\chi_3 - 3 \chi_2 + 3 \chi_4 & 0\\
0 & - 3\chi_1 + 3 \chi_3 + \chi_2 - \chi_4
\end{pmatrix},
\end{align*}
where
\begin{equation}\label{char-functions}
\chi_j\in\{0,1\} \text{ for } j=1,\dots,4
\quad \text{and} \quad
\chi_1 + \chi_2 + \chi_3 + \chi_4 = 1.
\end{equation}
Using this formulation and motivated by Hooke's law, we consider the following \emph{elastic energy}
\begin{align}
\label{eq:e_el}
E_{el}(u,\chi):= \int\limits_{[0,1]^2}\left|\nabla u - \begin{pmatrix} 
-\chi_1 +\chi_3 - 3 \chi_2 + 3 \chi_4 & 0\\
0 &-3\chi_1 +3 \chi_3 + \chi_2 - \chi_4
\end{pmatrix}\right|^2 dx.
\end{align}
Here $u:[0,1]^2 \rightarrow \R^2$ is the ``deformation'' and  the functions $\chi_j$ are subject to the constraints from \eqref{char-functions}. Moreover, here and in the following we have used the abbreviation 
\begin{align}
\label{eq:chi_full}
\chi= \text{diag}(\chi_{1,1},\chi_{2,2}),
\quad \text{where} \quad
\begin{cases}
\chi_{1,1}=-\chi_1 +\chi_3 - 3 \chi_2 + 3 \chi_4, \\
\chi_{2,2}=-3\chi_1 +3 \chi_3 + \chi_2 - \chi_4.
\end{cases}
\end{align}
The elastic energy thus measures the deviation of a given deformation from being a solution to the differential inclusion \eqref{eq:diff_incl}. We emphasize that due to the flexibility of approximate solutions, the vanishing of the elastic energy along some sequence $(u_j)_{j\in \N}$ does however not entail that along a subsequence the gradients $(\nabla u_j)_{j\in \N}$ converge in measure against a constant map (in $\mathcal{K}$).

Heading towards a scaling result for the Tartar square, for $F\in \mathcal{K}^{qc}$ arbitrary but fixed, we  set
\begin{equation}\label{eq:e_el-chi}
E_{el}^{\boxempty}(\chi,F) := \inf\limits_{u \in \mathcal{A}^{\boxempty}_F} E_{el}(u,\chi).
\end{equation}%
Associated with this definition we consider two natural choices for the possible classes of deformations among which we minimize: Fixing the mean value of $\nabla u$ we consider
\begin{align}
\label{eq:Aper}
\mathcal{A}^{\text{per}}_F:= \{u\in W^{1,1}_{loc}(\R^2;\R^2): \nabla u \ \T^2 \text{-periodic}, \ \overline{\nabla u} = F\}, \ \overline{\nabla u}:= \int\limits_{\T^2} \nabla u(x) dx.
\end{align}
In this case we always (for instance, for the elastic energy \eqref{eq:e_el}) identify $[0,1]^2$ with the torus $\T^2$ of side length one and in addition also assume that the phase indicators $\chi_j$ are one-periodic functions.
As an alternative, we fix affine boundary conditions for $u$ and study for $F\in \mathcal{K}^{qc}$ and $b\in \R^2$
\begin{align}
\label{eq:Aaff}
\mathcal{A}^{\text{aff}}_F:= \{u\in W^{1,1}_{loc}(\R^2;\R^2): \ u(x) = Fx + b \text{ on }  \R^2 \setminus [0,1]^2\}.
\end{align}

In order to regain some rigidity, we add a singular perturbation. More precisely modelling the ``surface energy'' by
\begin{align}
\label{eq:e_surf}
E_{surf}(\chi) :=  \sum\limits_{j=1}^4 \|\nabla \chi_j\|_{TV([0,1]^2)},
\end{align}
for every sufficiently small parameter $\epsilon>0$, we consider the total energy
\begin{align}
\label{eq:e}
E_{\epsilon}^{\boxempty}(\chi,F) := E_{el}^{\boxempty}(\chi,F) + \epsilon E_{surf}(\chi).
\end{align}
We emphasize that with the convention introduced above, in the case that we consider the minimization problem in the class \eqref{eq:Aper}, also in the definition of the surface energy the set $[0,1]^2$ is identified with $\T^2$.
The surface energy, being a higher order term, regularizes the problem by penalizing fine oscillations of the phase indicators and hence provides some compactness in the problem (for fixed $\epsilon>0$).
Seeking to study quantitatively ``how rigid'' or ``how flexible'' the Tartar square is, we are interested deriving a scaling law for the minimal (total) energy as $\epsilon \rightarrow 0$. As our main result, we obtain the following (up to exponents of logarithms) matching upper and lower scaling bounds:

\begin{thm}
\label{thm:main}
Let $\chi_j$ for $j=1,\dots,4$ be as in \eqref{char-functions} and let $E_\epsilon^{\boxempty}$ be defined as in \eqref{eq:e}.
For every $\epsilon>0$ and $\nu\in(0,1)$ let
\begin{equation}\label{eq:scalings}
r_\nu(\epsilon)=\exp\big(-c|\log(\epsilon)|^{\frac{1}{2}+\nu}\big),
\quad
r(\epsilon)=\exp\big(-C|\log(\epsilon)|^\frac{1}{2}\big),
\end{equation}
where $c,C>0$ are universal constants.  Let $\mathcal{X}^{\boxempty}$ be either given by
\begin{align*}
\mathcal{X}^{\text{per}}:=\{\chi: \T^2 \rightarrow \{0,1\}: \ \chi \text{ as in \eqref{char-functions} }\} \mbox{ or } \mathcal{X}^{\text{aff}}:=\{\chi: [0,1]^2 \rightarrow \{0,1\}: \ \chi \text{ as in \eqref{char-functions} }\} .
\end{align*}
Assume that $F \in \mathcal{K}^{qc}\setminus\mathcal{K}$ and define
\begin{align*}
E_{\epsilon}^{\boxempty}(F):=\inf\limits_{\chi \in \mathcal{X}^{\boxempty}}\{E_\epsilon^{\boxempty}(\chi,F) \} .
\end{align*}
Then, for every $\nu\in(0,1)$ there holds
$$
C_1 r_\nu(\epsilon) \leq  E_{\epsilon}^{\boxempty}\leq C_2 r(\epsilon)
$$
for every $\epsilon>0$ small enough. Here the constants $0<C_1\leq C_2$ are independent of $\epsilon$ but may depend on the choice of $F$.
\end{thm}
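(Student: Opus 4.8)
The plan is to prove the upper and lower bounds separately, with the two arguments being essentially independent in spirit.

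\textbf{Upper bound.} For the estimate $E_\epsilon^{\boxempty}(F)\le C_2 r(\epsilon)$ I would quantify the classical ``infinite order lamination'' construction for the Tartar square exactly as in \cite{W97, C99}. The idea is to build $\chi$ as a hierarchical (self-similar) laminate: at each level of the hierarchy one replaces a pure phase region by a fine simple laminate between two phases of $\mathcal{K}^{rc}$ realizing the relevant rank-one connection, and the construction has to cycle through the four ``wings'' $\mathrm{conv}(\{A_j,P_j\})$ of $\mathcal{K}^{rc}$ so that after four steps one has effectively moved the local average strictly closer to the $P_j$-square. On each level $k$ one introduces oscillations on a length scale $\lambda_k$; the elastic energy picks up the mismatch of the construction truncated at level $k$ (which decays geometrically in $k$, say like $\rho^k$ for some $\rho<1$ depending only on the geometry of $\mathcal{K}$), while the surface energy at level $k$ costs $\epsilon\lambda_k^{-1}$ per unit length times the total perimeter, so roughly $\epsilon \lambda_k^{-1}\prod_{i<k}(\text{something})$. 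Optimizing the number $N$ of levels and the ratios $\lambda_{k+1}/\lambda_k$ — balancing a geometrically small elastic term $\rho^N$ against a surface term that grows like $\epsilon$ times a product of refinement ratios — leads, after taking logarithms, to $N\sim|\log\epsilon|^{1/2}$ and an energy of order $\exp(-C|\log\epsilon|^{1/2})$. A minor technical point: for the class $\mathcal{A}_F^{\mathrm{aff}}$ one must cut off the periodic construction near $\partial[0,1]^2$ to match the affine datum $Fx+b$; this costs an additional controlled amount of elastic and surface energy on a boundary layer, which does not change the scaling. Since these computations are carried out in \cite{W97, C99} (and recalled in Section \ref{sec:upper}), I would mainly reproduce them with the bookkeeping made explicit.

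\textbf{Lower bound.} This is the main point and here the plan is the Fourier-space bootstrap advertised in the abstract. I would work in the periodic setting (the affine case reduces to it up to boundary-layer modifications) and set $E:=E_\epsilon^{\boxempty}(\chi,F)$ for a near-minimizer. First, from $E_{el}\le E$ one controls $\nabla u-\chi$ in $L^2$, and from $\epsilon E_{surf}\le E$ one controls $\|\nabla\chi_j\|_{TV}\le E/\epsilon$. The diagonal structure $\chi=\mathrm{diag}(\chi_{1,1},\chi_{2,2})$ and the compatibility condition $\mathrm{curl}(\nabla u)=0$ translate, for the two diagonal entries, into the statement that $\chi_{1,1}$ depends weakly on $x_1$ and $\chi_{2,2}$ weakly on $x_2$ up to an $L^2$-error of size $E^{1/2}$: concretely $\|\partial_{x_1}\chi_{1,1}\|_{\dot H^{-1}}$ and $\|\partial_{x_2}\chi_{2,2}\|_{\dot H^{-1}}$ are $\lesssim E^{1/2}$. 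The heart of the argument is then a ``chain rule in a negative Sobolev space'': because each $\chi_j$ is a characteristic function (so $\chi_j^2=\chi_j$, and more generally the $\chi_j$ satisfy the algebraic/nonlinear relations forced by $\mathcal{K}$ having no rank-one connections), controlling certain first-order derivatives of $\chi$ in $\dot H^{-1}$ forces, via the nonlinearity, control of derivatives in \emph{other} directions in $\dot H^{-1}$ as well — but at the price of a loss encoded by the $TV$-bound $E/\epsilon$ (which enters because one must interpolate/commute the nonlinearity against frequency cut-offs, and the high-frequency part is controlled by $E_{surf}$). Iterating this self-improvement over dyadic frequency annuli $|\xi|\sim 2^k$ yields, after $N$ steps, that the low-frequency part of $\chi$ is within $(CE/\epsilon)^{?}$-type errors of a constant, while each step of the bootstrap is only valid up to frequency $\sim(E/\epsilon)$-dependent thresholds; tracking the constants through $N\sim|\log\epsilon|^{1/2+\nu}$ iterations and using that $\chi$ cannot be (close to) constant since $F\notin\mathcal{K}$ forces $E\gtrsim\exp(-c|\log\epsilon|^{1/2+\nu})$.

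\textbf{Main obstacle.} I expect the genuinely hard step to be making the ``chain rule in a negative Sobolev space'' quantitative and, crucially, \emph{ansatz-free}: one needs an inequality of the shape ``if $\partial_1 v$ is small in $\dot H^{-1}$ and $v$ is a characteristic function with $\|\nabla v\|_{TV}\le M$, then $\partial_2 v$ (or a suitable nonlinear image of $v$) is controlled in $\dot H^{-1}$ with a loss polynomial in $M$'' — together with a careful commutator estimate showing that passing the nonlinearity $v\mapsto v^2=v$ through a Littlewood–Paley projection $P_{\le k}$ produces an error controlled by $2^{-k}\|\nabla v\|_{TV}$. Getting the dependence on $M=E/\epsilon$ sharp enough that $N\sim|\log\epsilon|^{1/2+\nu}$ iterations still close the bootstrap (rather than degrading to $|\log\epsilon|^{1}$ or worse) is the delicate quantitative core; the extra $\nu>0$ in the exponent of the lower bound is presumably exactly the slack one must concede to absorb the accumulated logarithmic losses from iterating the commutator estimates. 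Everything else — the elementary rigidity reductions, the passage between the periodic and affine classes, and the final contradiction with $F\notin\mathcal{K}$ — I would treat as routine by comparison.
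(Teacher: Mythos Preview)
Your upper bound plan matches the paper's Section~\ref{sec:upper}.

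For the lower bound, the high-level architecture (Fourier-space bootstrap exploiting a nonlinear constraint, with the $\nu$-slack absorbing accumulated losses) is right, but two key ingredients are misidentified and as stated the scheme would not close. First, the relevant nonlinearity is \emph{not} the idempotency $\chi_j^2=\chi_j$; it is the cross-relation between the two diagonal entries: since each matrix in $\mathcal{K}$ is determined by either diagonal entry, there are cubic polynomials $g,h$ with $\chi_{1,1}=g(\chi_{2,2})$ and $\chi_{2,2}=h(\chi_{1,1})$. This is the quantitative shadow of Proposition~\ref{prop:qual-rig1}, and it is what couples the $x_1$- and $x_2$-directions; the idempotency of a single $\chi_j$ gives no such coupling, and your proposed single-function inequality ``$\partial_1 v$ small in $\dot H^{-1}\Rightarrow\partial_2 v$ controlled'' is simply false (take $v=v(x_2)$). (Your indices are also swapped: the elastic energy controls $\|\partial_2\chi_{1,1}\|_{\dot H^{-1}}$ and $\|\partial_1\chi_{2,2}\|_{\dot H^{-1}}$, not the other way round.) Second, the bootstrap is not over dyadic annuli but over \emph{shrinking cones}: the elastic bound localizes $\hat f_1:=\hat\chi_{2,2}$ to a thin cone $C_1$ about the $k_2$-axis and $\hat f_2:=\hat\chi_{1,1}$ to a cone $C_2$ about the $k_1$-axis, each truncated at a radius $\mu_2$ coming from the surface energy. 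Since $f_2=g(f_1)$ with $g$ a polynomial of degree $d$, the Fourier support of $g(\chi_{C_1}(D)f_1)$ lies in the $d$-fold Minkowski sum of $C_1$ with itself, which is still essentially vertical; hence the mass of $\hat f_2$ inside $C_2$ must actually live in a cone of much smaller radius $\mu_3\sim d\mu\mu_2$. Iterating with the roles of $f_1,f_2$ swapped shrinks the radii geometrically until the cones collapse to $\{0\}$ after roughly $1/\alpha$ steps (with $\mu=\epsilon^\alpha$), forcing $f_1$ close to its mean and contradicting $F\notin\mathcal{K}$. The loss per step does not come from a commutator against Littlewood--Paley projections but from the fact that $g$ is only locally Lipschitz and Fourier cut-offs are unbounded on $L^\infty$: one replaces the naive Lipschitz estimate by Mihlin--H\"ormander $L^p$-bounds plus interpolation, paying a factor $C\gamma^{-2d}$ per step for an exponent loss $\gamma$; optimizing $\gamma=\alpha^2$ and then $\alpha\sim|\log\epsilon|^{-1/(2+\nu)}$ is what produces the exponent $\tfrac12+\nu$.
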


Let us comment on this result: In contrast to other phase transition problems, our scaling law is not polynomial in the small parameter $\epsilon>0$ but of an order which is converging more slowly as $\epsilon \rightarrow 0$ than any polynomial in $\epsilon>0$. This is due to the fact that we are dealing with infinite order laminates: While any finite order laminate is expected to have a polynomial in $\epsilon$ scaling, our problem becomes degenerate in that an infinite order laminate requires strictly more oscillation that any finite order laminate. In this sense, Theorem \ref{thm:main} captures and quantifies the infinite order of lamination in our problem and thus distinguishes it from many other scaling laws in the literature on phase transformations. 

The infinite order of lamination is also directly reflected in our proof of Theorem \ref{thm:main}. On the one hand, it directly enters in the (well-known, here quantified) upper bound construction. However, it also enters in a more subtle way in the main novelty of our article, the lower bound for which we use a bootstrap iteration argument. Although the lower bound necessitates an ansatz-free argument, it is still strongly reminiscent of the upper bound construction and also the staircase laminate argument from \cite{CFM05}. Seeking to mimic the rigidity argument for the ``stress-free'' differential inclusion \eqref{eq:diff_incl} in which one uses that the $\p_1 u_1$ and the $\p_2 u_2$ components of any solution to \eqref{eq:diff_incl} ``determine'' each other, we are lead to an interesting Fourier space ``chain rule problem'' in negative-order Sobolev spaces. Careful quantitative bootstrap type estimates for this problem then provide the central argument for our lower bound.

Our result (and model) thus serves as an extreme case compared to other scaling laws for differential inclusions in phase transformations in that it is an ``extremely expensive'' construction.

\subsection{Relation to the literature}
Our result should be viewed in the context of scaling laws in the calculus of variations in general and more specifically in the modelling of shape-memory alloys and related phase transformation problems (see \cite{K07} and \cite{M1} for surveys on this). In the context of the modelling of shape-memory alloys, scaling laws, providing some insights on the possible behaviour of energy minimizers, have been deduced in various settings \cite{KM1, KM2,CO, CO1, CC14,CC15, CZ16, CDZ17, CDMZ20, BG15, KW14, KW16, KO19, KK, KKO13, L01, Rue16b}. For certain models, in subsequent steps, even finer properties (such as for instance almost periodicity results) have been derived \cite{C1}.
While these methods have provided important insight into many physically relevant problems, none of the known scaling bounds deal with problems in which a dichotomy between rigidity and flexibility is known. Our problem thus addresses a weak form of this dichotomy for the first time in a model case.
Moreover, we emphasize that while our result does not directly model a martensitic phase transformation, it is strongly motivated by the commonly used differential inclusions and the arising mictrostructures as, for instance, used in describing these problems in the stress-free setting \cite{B, B3}. We emphasize that, for instance, in the (geometrically linearized) cubic-to-monoclinic phase transformation, it was shown that closely related $T_3$-structures appear \cite{CS13, CS15}, for whose more quantitative analysis our investigation seems to be a natural preliminary step.

\subsection{Outline of the article}
The remainder of the article is structured as follows:  In Section \ref{sec:upper} we first provide a quantitative version of the (well-known) upper bound construction for the flexibility of the Tartar square. In Section \ref{sec:lower}, after briefly recalling the (well-known) rigidity argument for the Tartar square and auxiliary properties of the associated elastic energy in Section \ref{sec:aux}, as the main novelty of our article, we complement the scaling of the upper bound construction with a (nearly) matching lower bound.

\section{An Upper Bound Construction}
\label{sec:upper}

In this Section we quantify the total energy
of the well-known construction of infinite orders of laminations which is used in the literature to prove flexibility of the Tartar square (see, for instance, \cite[Section 2.5]{M1}). We stress that this quantitative construction had first been quantified in the literature in \cite{W97, C99} (for closely related continuum and finite element models) and that we recall it for completeness here.
This construction is an example of a sequence with vanishing elastic energy which is not strongly compact.
Balancing the elastic and the surface energy terms through a parameter optimization, as in \cite{W97,C99}, we obtain an upper bound (in terms of scaling) of our perturbed problem both for the affine and the periodic settings.

\subsection{Quantification of the total energy of the infinite-order laminate}
\label{sec:ub-infty}
In what follows we take into account zero boundary conditions, that is we will define $u_\epsilon\in\mathcal{A}^{\rm aff}_0$ and $\chi_\epsilon\in\mathcal{X}^{\rm aff}$ for every $\epsilon>0$ and quantify
\begin{equation}\label{eq:ub_tot-e}
E_\epsilon(u_\epsilon,\chi_\epsilon):=E_{el}(u_\epsilon,\chi_\epsilon)+\epsilon E_{surf}(\chi_\epsilon).
\end{equation}
The argument is completely analogous for any other affine boundary datum $Fx +b$ with $F\in\mathcal{K}^{qc}\setminus\mathcal{K}$, $b\in \R^2$.
Further the construction directly provides the upper-bound estimate of Theorem \ref{thm:main} for both $E^{\rm aff}_\epsilon$ and $E^{\rm per}_\epsilon$ by taking the $\T^2$-periodic extension of $u_\epsilon$ and $\chi_\epsilon$.

Since $\mathcal{K}$ does not have rank-one connections we make use of some auxiliary matrices (in particular the matrices from \eqref{eq:P}) to build laminates of higher and higher order, reducing the volume fraction of the region in which the gradients differ from elements of $\mathcal{K}$ but increasing the surface energy.

\smallskip

\begin{figure}[t]
\includegraphics{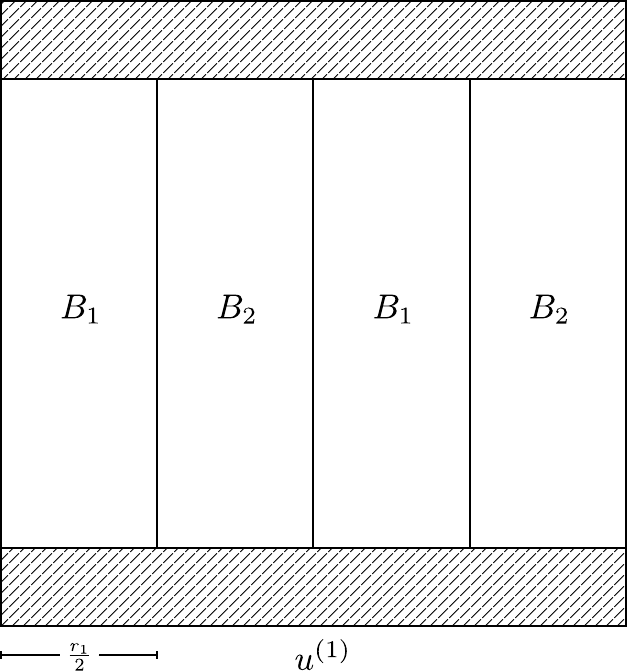}
\qquad
\includegraphics{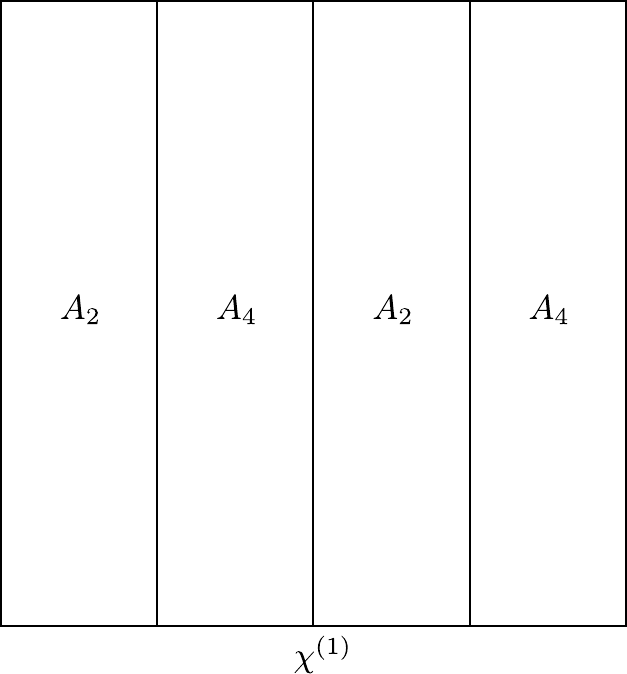}
\caption{On the left the first-order laminate construction $u^{(1)}$.
The shaded regions represent the cut-off areas.
On the right the projection of $\nabla u^{(1)}$ onto $\mathcal{K}$.}
\label{fig:ub-first}
\end{figure}

\emph{First-order laminate.}
Let $0<r_1<\frac{1}{2}$ be an arbitrarily small parameter to be determined such that $\frac{1}{r_1}$ is integer.
We resolve the boundary datum as a laminate (and a cut-off layer) with gradients
$$
B_1=\begin{pmatrix}-1&0\\0&0\end{pmatrix} \text{ and } B_2=-B_1.
$$
Any other rank-1-convex combination of elements of $\mathcal{K}^{qc}$ would lead to an analogous construction.
Thanks to the rank-1-connection between $B_2$ and $B_1$, we define the continuous function $v^{(1)}$ such that
$$
v^{(1)}(0,x_2)=v^{(1)}(r_1,x_2)=0,
\quad
\nabla v^{(1)}(x)=\begin{cases}
B_1 & x\in[0,\frac{r_1}{2}]\times[0,1], \\
B_2 & x\in[\frac{r_1}{2},r_1]\times[0,1],
\end{cases}
$$
and consider, without relabeling, its $r_1$-periodic (in the $x_1$ variable) extension on $[0,1]^2$.
We then use a cut-off argument to attain zero boundary conditions on the whole $\p[0,1]^2$ by setting $u^{(1)}\in W^{1,\infty}_0([0,1]^2;\R^2)$ as
$$
u^{(1)}(x_1,x_2)=\Big(\varphi\Big(\frac{x_2}{r_1}\Big)v^{(1)}(x_1,x_2)\Big)\varphi\Big(\frac{1-x_2}{r_1}\Big),
$$
where $\varphi(t)=\max(0,\min(2t,1))$.
We also set $\chi^{(1)}\in BV([0,1]^2;\mathcal{K})$ as the pointwise projection of $\nabla u^{(1)}$ on $\mathcal{K}$, see Figure \ref{fig:ub-first}.

It is convenient to view the elastic energy as the sum of two different terms.
One corresponds to the volume-fraction of the auxiliary states $B_1$ and $B_2$ and it is proportional to the area of $\{x\in[0,1]^2\,:\,\nabla u^{(1)}(x)\not\in\mathcal{K}\}$.
The other contribution is given by the cut-off and it is proportional to the area of $[0,1]\times[0,\frac{r_1}{2}]$.
Hence
$$
E_{el}(u^{(1)},\chi^{(1)}) \sim 1+r_1.
$$
The surface energy is the sum of the perimeters of $[(k-1)\frac{r_1}{2},k\frac{r_1}{2}]\times[0,1]$ for $k=1,\dots,\frac{2}{r_1}$, that is
$$
E_{surf}(\chi^{(1)}) \sim \frac{1}{r_1}.
$$

Here and in the sequel when writing $a\sim b$ we mean that $c^{-1}b\le a\le c b$ where $c$ is a fixed constant.

\smallskip

\begin{figure}[t]
\includegraphics{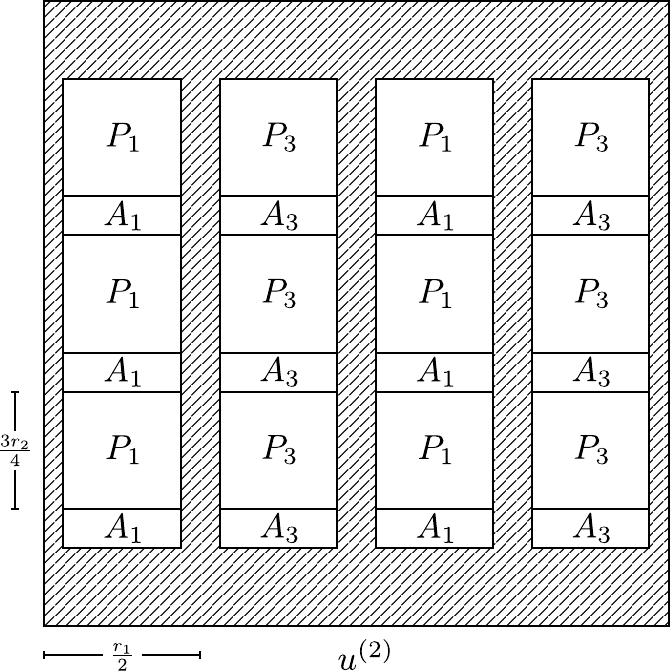}
\qquad
\includegraphics{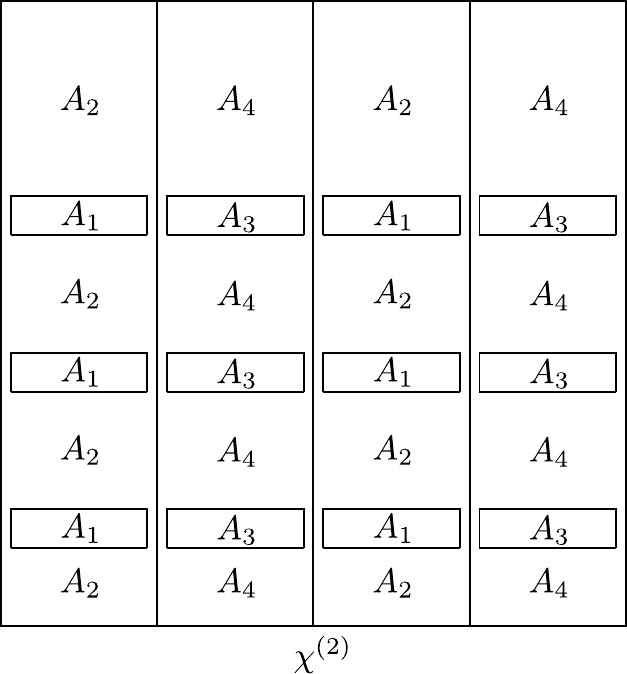}
\caption{On the left the second-order laminate construction $u^{(2)}$.
The shaded regions represent the cut-off areas.
On the right the projection of $\nabla u^{(2)}$ on $\mathcal{K}$.}
\label{fig:ub-second}
\end{figure}

\emph{Second-order laminate.}
Let $0<r_2<\frac{r_1}{2}$ be an arbitrary parameter such that $\frac{r_1}{r_2}$ is integer.
From the fact that
$$
B_1=\frac{1}{4}A_1+\frac{3}{4}P_1
\quad \text{ and } \quad
B_2=\frac{1}{4}A_3+\frac{3}{4}P_3,
$$
in each rectangle in which $\nabla u^{(1)}=B_1,B_2$ we replace $u^{(1)}$ with a simple laminate (up to cut-off) having gradients $A_1,P_1$ and $A_3,P_3$ respectively, attaining boundary conditions $u^{(1)}$.
Namely, we take $v^{(2)}$ to be continuous and such that
$$
v^{(2)}\Big(x_1,\frac{r_1}{2}\Big)=v^{(2)}\Big(x_1,r_2+\frac{r_1}{2}\Big)=B_1x,
\quad
\nabla v^{(2)}(x):=\begin{cases}
A_1 & x\in[0,\frac{r_1}{2}]\times[0,\frac{1}{4}r_2]+(0,\frac{r_1}{2}), \\
J_1 & x\in[0,\frac{r_1}{2}]\times[\frac{1}{4}r_2,r_2]+(0,\frac{r_1}{2}).
\end{cases}
$$
We consider its $r_2$-periodic (in the $x_2$ variable) extension on $[0,\frac{r_1}{2}]\times[\frac{r_1}{2},1-\frac{r_1}{2}]$ and put $v^{(2)}=u^{(1)}$ on $[0,\frac{r_1}{2}]\times\big([0,\frac{r_1}{2}]\cup[1-\frac{r_1}{2},1]\big)$.
We obtain $u^{(2)}\in W^{1,\infty}_0([0,1]^2;\R^2)$ after a cut-off argument in $\big([0,\frac{r_2}{2}]\cup[\frac{r_1}{2}-\frac{r_2}{2}]\big)\times[\frac{r_1}{2},1-\frac{r_1}{2}]$ and repeating this analogous construction in all the other parts of the rectangle in which $\nabla u^{(1)}=B_1,B_2$, using the rank-1-connection between $A_3$ and $P_3$ where $\nabla u^{(1)}=B_2$.
Set $\chi^{(2)}\in BV([0,1]^2;\mathcal{K})$ the projection of $\nabla u^{(2)}$ on $\mathcal{K}$, see Figure \ref{fig:ub-second}.

The elastic energy is given by the sum of two terms; the area of $\big\{x\in[0,1]^2\,:\,\nabla u^{(2)}(x)=P_1,P_3\big\}$ and the contribution given by the cut-off.
The energy of the cut-off of the current step gives a contributions of order $r_2$ for every rectangle in which $\nabla u^{(1)}=B_1,B_2$ that are $\frac{2}{r_1}$ many; namely,
$$
E_{el}(u^{(2)},\chi^{(2)}) \sim \frac{3}{4}+r_1+\frac{r_2}{r_1}.
$$
The surface energy is controlled by the perimeters of the rectangles in which $\nabla u^{(2)}$ is constant; indeed the rank-1-convexity of the cut-off process yields that, connecting $\nabla u^{(2)}$ to the boundary data (i.e., $B_1$ and $B_2$), the projection of $\nabla u^{(2)}$ changes at most once.
There are $\frac{4}{r_1 r_2}$ such rectangles each of perimeter of order $r_1$. 
Hence,
$$
E_{surf}(\chi^{(2)}) \sim \frac{1}{r_2}.
$$

\smallskip

\emph{m-th-order laminate.}
We define $u^{(m)}\in W^{1,\infty}_0([0,1]^2;\R^2)$ through an iterative procedure starting from $u^{(2)}$.
Thanks to the relation
$$
P_{j'}=\frac{1}{2}A_j+\frac{1}{2}P_j,
\quad
j'=\begin{cases}j+1&j=1,2,3,\\1&j=4,\end{cases}
$$
we replace $u^{(m-1)}$, in the rectangles in which $\nabla u^{(m-1)}=P_{j'}$, with $r_m$-periodic laminate of gradients $A_j$ and $P_j$ obtaining $u^{(m)}\in W^{1,\infty}_0([0,1]^2;\R^2)$ after a cut-off argument to attain $u^{(m-1)}$ at the boundary of each rectangle.
Here $r_m$ is an arbitrarily small parameter with $0<r_m<\frac{r_{m-1}}{2}$ and $\frac{r_{m-1}}{r_m}$ integer.
We then set $\chi^{(m)}\in BV([0,1]^2;\mathcal{K})$ the projection of $\nabla u^{(m)}$ on $\mathcal{K}$.

For every $m\ge3$ we have that
\begin{equation}\label{eq:ub_vol-frac}
\big|\{x\in[0,1]^2\,:\,\nabla u^{(m)}(x)=P_1,\dots,P_4\}\big|\sim\frac{1}{2}\big|\{x\in[0,1]^2\,:\,\nabla u^{(m-1)}(x)=P_1,\dots,P_4\}\big|.
\end{equation}
The volume fraction of the cut-off regions of the $m$-th step is $2\frac{r_m}{r_{m-1}}$.
Thus its contribution in the elastic energy is $2\frac{r_m}{r_{m-1}}\big|\{x\in[0,1]^2\,:\,\nabla u^{(m-1)}(x)=P_1,\dots,P_4\}\big|$.
Hence
$$
E_{el}(u^{(m)},\chi^{(m)}) \sim 2^{-m} + \sum_{j=2}^m 2^{-j}\frac{r_j}{r_{j-1}}+r_1.
$$
The surface energy is proportional to the sum of the perimeters of the rectangles in which $\nabla u^{(m)}=P_1,\dots,P_4$.
Denoting with $R_m\in\N$ the number of such rectangles, we have
$$
R_m\sim\frac{4}{r_m r_{m-1}}\big|\{x\in[0,1]^2\,:\,\nabla u^{(m)}(x)=P_1,\dots,P_4\}\big|.
$$
Since the perimeter of each rectangle is of order $r_{m-1}$ we get
$$
E_{surf}(\chi^{(m)})\sim 2^{-m}\frac{1}{r_m}.
$$
Notice that the factor $2^{-m}$ comes from the decreasing volume in \eqref{eq:ub_vol-frac} and will not affect the optimal scaling in $\epsilon$.
The total energy of the construction above is therefore
$$
E_\epsilon(u^{(m)},\chi^{(m)})\sim 2^{-m} + \Big(\sum_{j=2}^m 2^{-j}\frac{r_j}{r_{j-1}}+r_1\Big)+\epsilon 2^{-m}\frac{1}{r_m}
$$
and it depends on $\{r_j\}_{j=1}^m$.
In order to obtain a good upper bound, we determine the optimal choice of such parameters in terms of $r_1$.

Comparing the terms $r_1$ and $\frac{r_2}{r_1}$ we get $r_2\le r_1^2$.
Since the energy depends on $r_2$ in only another term, that is $\frac{r_3}{r_2}$, the choice $r_2\sim r_1^2$ is optimal.
Working inductively, we get $r_j\sim r_1^j$.
Thus, we denote with $u_{m,r}$ and $\chi_{m,r}$ the functions $u^{(m)}$ and $\chi^{(m)}$ defined as above, corresponding to $r_j=r^j$, where $r>0$ is a small parameter.
Hence, we have
\begin{equation}\label{eq:ub_e}
E_\epsilon(u_{m,r},\chi_{m,r})\sim 2^{-m}+r+\epsilon 2^{-m}r^{-m}.
\end{equation}

\subsection{Determination of the length scale}
From the analysis performed above we obtain the following result, which provides an upper (scaling) bound for Theorem \ref{thm:main}.

\begin{prop}\label{prop:upp-bound}
Let $E_\epsilon$ and $r(\epsilon)$ be defined as in \eqref{eq:ub_tot-e} and \eqref{eq:scalings} respectively.
For every $\epsilon>0$ small enough and every $F\in\mathcal{K}^{qc}\setminus\mathcal{K}$, there exist $u_\epsilon\in\mathcal{A}^\boxempty_F$ and $\chi_\epsilon\in\mathcal{X}^\boxempty$ such that
$$
E_\epsilon(u_\epsilon,\chi_\epsilon) \sim r(\epsilon).
$$
\end{prop}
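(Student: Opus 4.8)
The plan is to optimize the bound \eqref{eq:ub_e} over the two remaining free parameters, namely the order of lamination $m\in\N$ and the base length scale $r\in(0,\tfrac12)$, and to show that the resulting minimal value scales like $r(\epsilon)=\exp(-C|\log\epsilon|^{1/2})$. Fixing $\epsilon>0$ small, I first observe that the term $2^{-m}$ in \eqref{eq:ub_e} is non-increasing in $m$ while the term $\epsilon 2^{-m}r^{-m}=\epsilon(2r)^{-m}$ is (for $r<\tfrac12$) increasing in $m$; hence the optimal $m$ is obtained, up to constants, by balancing $2^{-m}\sim \epsilon(2r)^{-m}$, i.e.\ $r^{m}\sim\epsilon$, which gives $m\sim \frac{|\log\epsilon|}{|\log r|}$. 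With this choice the total energy is of order $r + 2^{-m}$, and since $2^{-m}=2^{-c|\log\epsilon|/|\log r|}$ decays faster than any power of $\epsilon$ and in particular is dominated by $r$ once $r\gtrsim 2^{-m}$, the leading term is $r$ plus the contribution $2^{-m}$; more precisely one should keep track of $2^{-m}\sim\exp\!\big(-\log 2\,\tfrac{|\log\epsilon|}{|\log r|}\big)$ and balance it against $r=\exp(-|\log r|)$. Setting $|\log r|=:t>0$, the quantity to minimize is $\exp(-t)+\exp(-c|\log\epsilon|/t)$, whose two exponents coincide when $t=\sqrt{c|\log\epsilon|}$, giving both terms of order $\exp(-\sqrt{c|\log\epsilon|})=\exp(-C|\log\epsilon|^{1/2})=r(\epsilon)$.

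Concretely, I would therefore \emph{choose} $r=r(\epsilon)=\exp(-C|\log\epsilon|^{1/2})$ and $m=m(\epsilon):=\big\lceil C^{-1}|\log\epsilon|^{1/2}\big\rceil$ (so that $m\sim|\log\epsilon|/|\log r|$ and $r^m\sim\epsilon$ up to the harmless factor $2^{-m}$), and verify directly from \eqref{eq:ub_e} that each of the three summands $2^{-m}$, $r$, $\epsilon 2^{-m}r^{-m}$ is $\lesssim r(\epsilon)$ and that at least one is $\gtrsim r(\epsilon)$, which yields $E_\epsilon(u_{m,r},\chi_{m,r})\sim r(\epsilon)$. Here one uses $2^{-m}=\exp(-m\log2)\le\exp(-(\log 2)C^{-1}|\log\epsilon|^{1/2})$, and choosing the constant $C$ in \eqref{eq:scalings} appropriately (e.g.\ $C\le\log 2/C$, i.e.\ $C\le\sqrt{\log 2}$, or simply absorbing constants) makes $2^{-m}\lesssim r(\epsilon)$; similarly $\epsilon 2^{-m}r^{-m}\le \epsilon r^{-m}\le\epsilon\cdot\epsilon^{-1}\cdot(\text{const})\lesssim 1$, and a slightly more careful bookkeeping of the integer rounding in $m$ and the integrality constraints ``$r_{j-1}/r_j\in\N$'' shows this term is in fact $\lesssim r(\epsilon)$ as well.

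There is a minor technical point to address: the construction in Section~\ref{sec:ub-infty} requires $1/r_1\in\N$ and $r_{j-1}/r_j\in\N$, whereas the optimal $r=r(\epsilon)$ need not satisfy these divisibility constraints. This is handled routinely by replacing $r$ with a nearby value $\tilde r$ of the form $1/N$, $N\in\N$, with $\tilde r\sim r(\epsilon)$ (so that $r_j=\tilde r^{\,j}$ automatically satisfies $r_{j-1}/r_j=1/\tilde r\in\N$); the estimate \eqref{eq:ub_e} is stable under this replacement up to the constant implicit in ``$\sim$''. Finally, the passage from the zero-boundary-condition construction on $[0,1]^2$ to general $F\in\mathcal{K}^{qc}\setminus\mathcal{K}$ and to the periodic class $\mathcal{A}^{\mathrm{per}}_F$ is exactly as indicated in the text: one starts the first-order laminate from a rank-one decomposition of $F$ (rather than of $0$) into $B_1,B_2$-type auxiliary matrices, which exists since $F\in\mathcal{K}^{qc}$, and for the periodic statement one takes the $\T^2$-periodic extension of $u_\epsilon,\chi_\epsilon$, noting that the cut-off layers make the construction compatible with periodic boundary conditions. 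I do not expect a genuine obstacle here; the only place demanding care is the bookkeeping that ensures the $2^{-m}$ and $\epsilon 2^{-m}r^{-m}$ terms do not secretly dominate after the integer rounding of $m$, i.e.\ verifying that the balancing $r^m\sim\epsilon$ can be achieved with $m\in\N$ while keeping all three terms comparable to $r(\epsilon)$.
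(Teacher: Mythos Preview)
Your proposal is correct and follows essentially the same approach as the paper: both optimize the two-parameter bound \eqref{eq:ub_e} and arrive at $m\sim|\log\epsilon|^{1/2}$, $r\sim\exp(-C|\log\epsilon|^{1/2})$. The only cosmetic difference is the order of optimization---the paper first fixes $m$ and balances $r\sim\epsilon 2^{-m}r^{-m}$ to get $r\sim\epsilon^{1/(m+1)}$, then balances $2^{-m}\sim\epsilon^{1/(m+1)}$ to find $m$, whereas you first fix $r$ and balance $2^{-m}\sim\epsilon 2^{-m}r^{-m}$ to get $m\sim|\log\epsilon|/|\log r|$, then minimize in $r$; the outcome and the underlying idea are identical.
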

\begin{proof}
As already noticed, it is sufficient to consider affine boundary conditions.
The result comes form a parameter optimization in terms of $\epsilon$ for the constructions $u_{m,r}$ and $\chi_{m,r}$ defined in Subsection \ref{sec:ub-infty}.
Determine first the optimal length scale $r$ for the $m$-th iteration by comparing the terms $r$ and $\epsilon 2^{-m}r^{-m}$ in \eqref{eq:ub_e}, obtaining $r\sim\epsilon^\frac{1}{m+1}$.
We now look for the optimal order of iterations $m_\epsilon$.
From $2^{-m}\sim\epsilon^\frac{1}{m+1}$ we get $m_\epsilon\sim|\log(\epsilon)|^\frac{1}{2}$.
This gives
$$
r_\epsilon\sim \epsilon^{c|\log(\epsilon)|^{-\frac{1}{2}}}=\exp\big(-c|\log(\epsilon)|^\frac{1}{2}\big)
$$
for some $c>0$,
which yields the result for $F=0$ by \eqref{eq:ub_e} by taking $u_\epsilon=u_{m_\epsilon,r_\epsilon}$ and $\chi_\epsilon=\chi_{m_\epsilon,r_\epsilon}$.

The construction corresponding to a non-zero boundary datum differs from $u^{(m)}$ and $\chi^{(m)}$ only in the first step, i.e. $m=1$, being then completely analogous.
Thus it does not affect the scaling of $E_\epsilon$, hence the result is proved.
\end{proof}

\begin{rmk}
We note that $r(\epsilon)$ is smaller than any logarithmic scale and greater than any power of $\epsilon$.
Indeed, given $0<\alpha\le1$ we get
$$
\lim_{\epsilon\to0^+}\frac{\exp(-c|\log(\epsilon)|^\frac{1}{2})}{\epsilon^\alpha}=\lim_{t\to+\infty} e^{\alpha t - c\sqrt{t}}=+\infty
$$
and
$$
\lim_{\epsilon\to0^+}\frac{\exp(-c|\log(\epsilon)|^\frac{1}{2})}{\frac{1}{|\log(\epsilon)|^\alpha}}=\lim_{t\to+\infty} e^{-c\sqrt{t}}t^\alpha=0.
$$
Hence,
$$
\epsilon^\alpha\ll r(\epsilon)\ll |\log(\epsilon)|^{-\alpha}, \quad \text{for every } 0<\alpha\le1.
$$
\end{rmk}

\begin{rmk}
We do not claim that our constant $c>0$ is optimal. It is expected that this depends on the finer properties of the upper bound construction, e.g. on using branched constructions instead of direct laminations. Since the value of the constant $c>0$ is not the main emphasis of our scaling result, we do not pursue this further in this article.
\end{rmk}

\section{A Qualitative Rigidity Argument and Some Auxiliary Results for the Elastic Energy}

\label{sec:aux}

In this section, we recall an argument for the exactly stress-free rigidity of the Tartar square which will serve as our guideline for the lower bound estimate. Additionally, we will recall the expression of the elastic energy in Fourier space for different affine boundary conditions which will become a central ingredient in our quantitative lower bound arguments.

\subsection{A qualitative rigidity argument}

We recall a qualitative rigidity argument which we will mimic in our lower bound estimate.

\begin{prop}\label{prop:qual-rig1}
Let $u\in W^{1,\infty}_{loc}(\R^2;\R^2)$ be a solution of the differential inclusion
$$
\nabla u\in\mathcal{K} \mbox{ a.e. in } [0,1]^2,
$$
then $\nabla u$ is a constant matrix.
In particular $u(x)=A_i x +b$ for some $i=1,\dots,4$ and $b\in \R^2$.
\end{prop}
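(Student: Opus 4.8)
The plan is to exploit the \emph{diagonal} structure of $\mathcal{K}$. Since every matrix in $\mathcal{K}$ is diagonal, the inclusion $\nabla u\in\mathcal{K}$ forces the off-diagonal entries of $\nabla u$ to vanish, i.e.\ $\p_{x_2}u_1=0$ and $\p_{x_1}u_2=0$ a.e.\ in $[0,1]^2$. Because $u\in W^{1,\infty}_{loc}$, the components $u_1,u_2$ are Lipschitz, hence absolutely continuous on a.e.\ line parallel to the coordinate axes; combining this with Fubini I would conclude that $u_1$ agrees a.e.\ with a (Lipschitz) function of $x_1$ only and $u_2$ with a function of $x_2$ only. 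Consequently the two diagonal entries of $\nabla u$, which by \eqref{eq:chi_full} are written as $\chi_{1,1}$ and $\chi_{2,2}$, are a.e.\ equal to functions $\xi_1=\xi_1(x_1)$ and $\xi_2=\xi_2(x_2)$, with $\xi_1$ taking values in $\{-1,-3,1,3\}$ and $\xi_2$ in $\{-3,1,3,-1\}$.

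The second ingredient is a purely combinatorial observation about $\mathcal{K}$: listing the diagonal pairs $(-1,-3),(-3,1),(1,3),(3,-1)$ of $A_1,\dots,A_4$, one sees that the first entries are pairwise distinct and so are the second entries, so these four pairs are exactly the graph of a bijection $\sigma\colon\{-1,-3,1,3\}\to\{-3,1,3,-1\}$ (namely $\sigma(-1)=-3$, $\sigma(-3)=1$, $\sigma(1)=3$, $\sigma(3)=-1$). This is the precise sense in which ``$\p_{x_1}u_1$ and $\p_{x_2}u_2$ determine each other''. The inclusion $\nabla u\in\mathcal{K}$ then amounts to the requirement $\xi_2(x_2)=\sigma(\xi_1(x_1))$ for a.e.\ $(x_1,x_2)\in[0,1]^2$.

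To conclude I would run a Fubini slicing argument: there is a full-measure set of $x_1$ for which $\xi_2(x_2)=\sigma(\xi_1(x_1))$ holds for a.e.\ $x_2$; fixing one such $x_1$, the right-hand side is a fixed constant, so $\xi_2\equiv w$ a.e.\ for some $w\in\{-3,1,3,-1\}$. Feeding this back, $\xi_1(x_1)=\sigma^{-1}(w)$ for a.e.\ $x_1$, so $\xi_1$ is a.e.\ constant as well. Hence $\nabla u$ equals the constant matrix $A_i=\text{diag}(\sigma^{-1}(w),w)\in\mathcal{K}$ a.e.\ in $[0,1]^2$, and a Lipschitz map with a.e.\ constant gradient is affine, which yields $u(x)=A_ix+b$ for some $b\in\R^2$.

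The argument is elementary; the only point requiring a little care is the passage from $\p_{x_2}u_1=0$ a.e.\ to ``$u_1$ independent of $x_2$'', where one must not merely use the pointwise vanishing of the derivative but invoke the absolute continuity of $u_1$ on a.e.\ vertical line together with Fubini (equivalently, mollify in $x_2$ and pass to the limit). This is classical, so I expect no genuine obstacle here; the actual content is the combinatorial fact that in $\mathcal{K}$ each diagonal entry determines the other, which is exactly the rigidity mechanism that the quantitative lower bound in Section~\ref{sec:lower} will have to reproduce, in a quantitatively stable, negative-Sobolev-norm form.
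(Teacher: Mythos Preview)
Your proposal is correct and follows essentially the same approach as the paper's proof: both use the diagonal structure of $\mathcal{K}$ to deduce that $u_1=u_1(x_1)$ and $u_2=u_2(x_2)$, and then exploit the combinatorial fact that in $\mathcal{K}$ each diagonal entry uniquely determines the other to conclude that $\nabla u$ must be constant. Your write-up is in fact slightly more explicit about the Fubini/absolute-continuity step and about the bijection $\sigma$, but the mechanism is identical to the paper's.
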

\begin{proof}
We follow the approach used in \cite[proof of Theorem 2.5]{M1}.
From the fact that the elements of $\mathcal{K}$ are diagonal matrices we deduce
$$
\p_2 u_1 = 0, \quad \p_1 u_2 = 0,
$$
thus
$$u_1(x_1,x_2) = f_1(x_1), \quad u_2(x_1,x_2) = f_2(x_2)$$
for some $f_1,f_2:\R\to\R$.
Hence, we obtain
\begin{align}
\begin{split}
\label{eq:grad_const}
\p_1 u_1(x_1,x_2) = f_1'(x_1) = -\chi_1(x_1,x_2) +\chi_3(x_1,x_2) - 3\chi_2(x_1,x_2) + 3\chi_4(x_1,x_2),\\
\p_2 u_2(x_1,x_2) = f_2'(x_2) = -3\chi_1(x_1,x_2) +3\chi_3(x_1,x_2) + \chi_2(x_1,x_2) - \chi_4(x_1,x_2).
\end{split}
\end{align}
We note that every matrix of $\mathcal{K}$ is completely identified by any of its diagonal entries, thus $-\chi_1 +\chi_3 - 3 \chi_2 + 3 \chi_4$ changes if and only if $-3 \chi_1 +3\chi_3 + \chi_2-\chi_4 $ does.
By \eqref{eq:grad_const} this however implies that $-\chi_1 +\chi_3 - 3 \chi_2 + 3 \chi_4$ is both a function of $x_1$ only and of $x_2$ only.
Thus, it must be constant.
\end{proof}

Note that this is a particular case of the general fact that any Lipschitz solution of $\nabla u\in\mathcal{K}'$ with $\mathcal{K}'\subset\R^{n\times m}$ of cardinality $4$ whose elements are not rank-1-connected is trivial (see \cite[Theorem 7]{CK00}).

\subsection{Elastic energy in Fourier space}

We give the expression of the elastic energy $E_{el}$ defined in \eqref{eq:e_el-chi} in Fourier space with periodic boundary conditions, following a standard approach in the literature \cite{CO1,KKO13}.
It will be useful in the sequel to rewrite $E_{el}$ in terms of the diagonal entries of $\chi$, that is
\begin{equation*}
\chi_{1,1}=-\chi_1+\chi_3-3\chi_2+3\chi_4,
\quad
\chi_{2,2}=-3\chi_1+3\chi_3+\chi_2-\chi_4.
\end{equation*}

We first fix the notation: for every $\T^2$-periodic function $f:\T^2\to\R$ denote by
$$
\F(f)(k)=\frac{1}{2\pi}\int_{\T^2} f(x) e^{-2\pi \,i\, k\cdot x} dx, \quad k\in\Z^2
$$
its Fourier transform.

We will denote with $x$ the space variable and with $k$ the frequency variable.
If there is no ambiguity, we will also use the notation $\hat f=\F(f)$.

With this notation fixed, we now turn to the characterization of the elastic energy in terms of a suitable Fourier multiplier.

\begin{lem}\label{lem:e_el-old}
Let $E_{el}^{\rm per}$ be defined in \eqref{eq:e_el-chi} and $\{\chi_j\}$ be as in \eqref{char-functions} and $\T^2$-periodic.
Then for every $F\in\R^{2\times2}_{\rm sym}$ and $\chi\in\mathcal{X}^{\rm per}$ there holds
$$
E_{el}^{\rm per}(\chi,F)=\sum_{k\in\Z^2\setminus\{(0,0)\}} \frac{k_2^2}{|k|^2}|\hat\chi_{1,1}|^2+\frac{k_1^2}{|k|^2}|\hat\chi_{2,2}|^2+|\hat\chi(0)-F|^2.
$$
\end{lem}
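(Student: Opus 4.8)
The plan is to compute the elastic energy directly by minimizing over $u$ in Fourier space. First I would write the elastic energy as
\[
E_{el}(u,\chi) = \int_{\T^2} \left| \partial_1 u_1 - \chi_{1,1} \right|^2 + \left| \partial_2 u_2 - \chi_{2,2} \right|^2 + \left| \partial_2 u_1 \right|^2 + \left| \partial_1 u_2 \right|^2 \, dx,
\]
using that the competitor matrix $\chi$ is diagonal, so the off-diagonal entries of $\nabla u$ are simply penalized toward zero. Passing to Fourier series via Parseval, and writing $\hat u = (\hat u_1, \hat u_2)$, the energy becomes a sum over $k \in \Z^2$ of
\[
\left| 2\pi i k_1 \hat u_1(k) - \hat\chi_{1,1}(k) \right|^2 + \left| 2\pi i k_2 \hat u_2(k) - \hat\chi_{2,2}(k) \right|^2 + \left| 2\pi i k_2 \hat u_1(k) \right|^2 + \left| 2\pi i k_1 \hat u_2(k) \right|^2 .
\]
Since $u_1$ and $u_2$ decouple in this expression, I would minimize in each $\hat u_1(k)$ and $\hat u_2(k)$ separately for each fixed $k$.

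For $k \neq (0,0)$, minimizing $\left| 2\pi i k_1 \hat u_1 - \hat\chi_{1,1} \right|^2 + \left| 2\pi i k_2 \hat u_1 \right|^2$ over $\hat u_1 \in \C$ is an elementary least-squares problem: the optimal $\hat u_1(k)$ is the projection, and plugging it back yields residual $\frac{k_2^2}{|k|^2}|\hat\chi_{1,1}(k)|^2$. Concretely, writing $\hat u_1 = \lambda$, the quantity is $(2\pi)^2(k_1^2 + k_2^2)|\lambda|^2 - 2\Ree(\overline{2\pi i k_1 \lambda}\,\hat\chi_{1,1}) + |\hat\chi_{1,1}|^2$, whose minimum over $\lambda$ equals $|\hat\chi_{1,1}|^2 - \frac{k_1^2}{|k|^2}|\hat\chi_{1,1}|^2 = \frac{k_2^2}{|k|^2}|\hat\chi_{1,1}|^2$. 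The same computation with indices swapped gives $\frac{k_1^2}{|k|^2}|\hat\chi_{2,2}(k)|^2$ for the $u_2$ term. Summing over $k \neq (0,0)$ produces the Fourier multiplier sum in the statement.

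For the zero mode $k = (0,0)$, the derivative terms vanish, so the contribution is $|\hat\chi_{1,1}(0) - F_{11}|^2 + |\hat\chi_{2,2}(0) - F_{22}|^2 + |F_{12}|^2 + |F_{21}|^2$; since $F \in \R^{2\times2}_{\rm sym}$ and in fact (implicitly here) $F$ is taken diagonal as an element related to $\mathcal{K}^{qc}$, this collapses to $|\hat\chi(0) - F|^2$ with $\hat\chi(0) = \operatorname{diag}(\hat\chi_{1,1}(0), \hat\chi_{2,2}(0))$ — one should remark that the off-diagonal part of $F$, if present, contributes an additive constant $|F_{12}|^2 + |F_{21}|^2$ which can be absorbed or noted. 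I would also check the admissibility issue: the minimizing $\hat u_j$ defined modewise actually corresponds to a genuine $W^{1,1}_{loc}$ periodic competitor, which follows since $\chi_{1,1}, \chi_{2,2} \in L^\infty(\T^2) \subset L^2(\T^2)$ have square-summable Fourier coefficients and the multipliers $\frac{k_1}{|k|^2}$ etc. are bounded, so $\nabla u \in L^2$. The only mild subtlety — and the point most deserving of care rather than difficulty — is making the reduction to a finite-dimensional per-mode minimization rigorous (interchanging inf over $u$ with the sum over $k$), which is justified because the modewise infima are attained and the resulting $u$ is admissible, so the infimum over $\mathcal{A}^{\rm per}_F$ is exactly the sum of modewise minima. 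I expect no genuine obstacle here; this is a standard relaxation computation.
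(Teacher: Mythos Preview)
Your proof is correct and follows essentially the same route as the paper: both pass to Fourier space and minimize modewise, with the paper using the substitution $u=Fx+v$ and vector notation $|\hat v\otimes ik-\hat\chi|^2$, while you work component-by-component and track the zero mode separately. The only cosmetic issue is that you write $\hat u$ for what is really the Fourier coefficient of the periodic part $v$ (since $u$ itself is not periodic), but you handle the zero-mode constraint correctly, and your added remarks on admissibility of the minimizer are a welcome bit of rigor the paper leaves implicit.
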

\begin{proof}
We first notice that
\begin{align*}
E_{el}^{\rm per}(\chi,F) &= \inf\Big\{\int_{\T^2} |\nabla u-F+F-\chi|^2dx : \nabla u \ \T^2\text{-periodic}, \ \overline{\nabla u}=F\Big\} \\
&= \inf\Big\{\int_{\T^2} |\nabla v-(\chi-F)|^2dx : v \ \T^2\text{-periodic}, \ \overline{\nabla v}=0\Big\}.
\end{align*}
We can thus rewrite $E_{el}^{\rm per}(\chi,F)$ in Fourier space as follows
\begin{equation}\label{eq:e_el-fourier}
E_{el}(v,\chi)=\sum_{k\in\Z^2}|\hat v\otimes ik-\hat\chi|^2.
\end{equation}
By minimizing \eqref{eq:e_el-fourier} in $\hat u$ we obtain
$$
(\hat v\otimes ik-\hat\chi):\hat w\otimes ik=0, \quad k\in\Z^2\setminus\{(0,0)\},
$$
for every test function $w\in L^2(\T^2;\R^2)$, which reads
$$
(\hat v\otimes ik)k=\hat\chi k, \quad k\in\Z^2\setminus\{(0,0)\}.
$$
This is solved by
$$
\hat v_1=\frac{-ik_1}{|k|^2}\hat\chi_{1,1}, \quad \hat v_2=\frac{-ik_2}{|k|^2}\hat\chi_{2,2}.
$$
Substituting these values into \eqref{eq:e_el-fourier} we get the result.
\end{proof}

In view of the lower-bound estimate which is formulated in Theorem \ref{thm:lower_bound} (see the proof in Section \ref{sec:lower_proof}), it is worth noting that from the characterization given in Lemma \ref{lem:e_el-old} we have
$$
E_{el}^{\rm per}(\chi,F) \ge \|\p_2\chi_{1,1}\|^2_{\dot H^{-1}}+\|\p_1\chi_{2,2}\|^2_{\dot H^{-1}}.
$$

\section{A Bootstrap Argument and a Proof of the Lower Bound}
\label{sec:lower}

In this section, we turn to the proof of the lower bound of Theorem \ref{thm:main}. To this end, we will make use of a bootstrap argument which again highlights the infinite order of lamination in our solutions.

In the following Sections \ref{sec:chain}-\ref{sec:chain_proof} and \ref{sec:lower_proof}, we first consider the case of periodic data and prove a lower bound for $E^{\text{per}}_{\epsilon}$. Noting that $E^{\text{per}}_{\epsilon} \leq E^{\text{aff}}_{\epsilon}$ then also leads to the desired lower bound result of Theorem \ref{thm:main} in the case of affine boundary conditions (see Section \ref{sec:lower_proof} for the details).

\subsection{A chain rule argument in $H^{-1}$}
\label{sec:chain}
In this section, we prove the following main result which will be used to prove the lower bound of Theorem \ref{thm:main} in Section \ref{sec:lower_proof}.

\begin{prop}\label{prop:H-1aim}
Let $f_1, f_2\in L^\infty(\T^2)\cap BV(\T^2)$ and let $g, h:\R\to\R$ be nonlinear polynomials with $g(0)=0=h(0)$ such that $f_2=g(f_1)$ and $f_1=h(f_2)$.
For any $\alpha \in (0,1)$, if
\begin{equation}
\label{eq:H-1aim1}
\|\p_1 f_1\|^2_{\dot{H}^{-1}} + \|\p_2 f_2\|^2_{\dot{H}^{-1}} \leq \delta
\quad \text{and} \quad
\|\nabla f_1\|_{TV}+\|\nabla f_2\|_{TV}\le\beta,
\end{equation}
then there exist $c_0\in \R$ and $C_0>0$ such that
\begin{equation}
\label{eq:H-1aim2}
\|f_1-c_0\|^2_{L^2} \leq \left( \frac{4C_0 }{\alpha^{4d}}  \right)^{\frac{1}{\alpha}}\epsilon^{-2\alpha}\max\{(\delta + \epsilon\beta)^\frac{1}{2},\delta+\epsilon\beta\},
\end{equation}
for every $\epsilon>0$ small enough.
\end{prop}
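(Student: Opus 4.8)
The plan is to exploit the hypothesis $f_2 = g(f_1)$ and $f_1 = h(f_2)$ to set up a bootstrap in Fourier space, mimicking the qualitative rigidity argument (Proposition \ref{prop:qual-rig1}). The idea is that $\|\p_1 f_1\|_{\dot H^{-1}}$ small says $f_1$ is ``almost a function of $x_2$ only'', while $\|\p_2 f_2\|_{\dot H^{-1}}$ small says $f_2 = g(f_1)$ is ``almost a function of $x_1$ only''; the only way to reconcile these via a nonlinear relation is for $f_1$ to be almost constant. First I would reduce to a statement about a single quantity: letting $\mu := \delta + \epsilon\beta$ and working with the oscillation $\|f_1 - c_0\|_{L^2}^2$ for $c_0 := \hat f_1(0)$, I want to show this is controlled by a power of $\mu$ times a power of $\epsilon^{-1}$. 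Since $g$ is a polynomial of some degree $d$ with $g(0)=0$, and $f_1 \in L^\infty$, all compositions are bounded, so $L^2$ and $L^\infty$ bounds on $f_1$ are available for free.

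**The bootstrap mechanism.** The core step is a ``chain rule in $\dot H^{-1}$'' estimate: I expect the paper to prove that if a bounded $BV$ function $f$ satisfies $\|\p_1 f\|_{\dot H^{-1}} \le \eta$, then for a polynomial $p$ of degree $d$ one controls $\|\p_1 p(f)\|_{\dot H^{-1}}$ by something like $C(\|p\|, \|f\|_{L^\infty}, \|f\|_{BV})\, \eta^{\theta}$ for some exponent $\theta = \theta(d) < 1$ — the loss of an exponent being exactly why the final bound has the fractional-power structure $\mu^{1/2}$ and the $\alpha$-dependent constants. Granting such a lemma, the argument would run: from $\|\p_1 f_1\|_{\dot H^{-1}}^2 \le \mu$ deduce $\|\p_1 f_2\|_{\dot H^{-1}} = \|\p_1 g(f_1)\|_{\dot H^{-1}}$ is small; combined with $\|\p_2 f_2\|_{\dot H^{-1}}^2 \le \mu$ this bounds $\|\nabla f_2\|_{\dot H^{-1}}$, hence $\|f_2 - \hat f_2(0)\|_{\dot H^{-1}}$ is small (Poincaré in $\dot H^{-1}$: $\|f - \bar f\|_{\dot H^{-1}}^2 = \sum_{k\ne 0} |k|^{-2}|\hat f(k)|^2 \le \sum_{k \ne 0} |\hat f(k)|^2 / |k|^2$, wait — one actually needs $\|\nabla f\|_{\dot H^{-1}}$ controls $\|f - \bar f\|_{L^2}$, which is immediate since $|\hat f(k)|^2 \le |k|^{-2}(k_1^2+k_2^2)|\hat f(k)|^2 \cdot$ ... more precisely $\|f-\bar f\|_{L^2}^2 = \sum_{k\ne0}|\hat f(k)|^2 \le \|\nabla f\|_{\dot H^{-1}}^2$ only when $|k|\ge 1$, which holds on $\Z^2\setminus\{0\}$). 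Then interpolating the $L^2$ smallness of $f_2 - \hat f_2(0)$ with its $BV$ bound (so the $L^\infty$, or at least higher $L^p$, oscillation of $f_2$ is controlled), and applying $f_1 = h(f_2)$ with $h(0)=0$ rewritten appropriately around the constant, gives smallness of $f_1 - c_0$ in $L^2$. The ``bootstrap'' aspect — iterating to improve exponents and arriving at the $\alpha$-parametrized family of bounds with constants blowing up like $\alpha^{-4d/\alpha}$ — is the delicate bookkeeping: each iteration trades a better power of $\mu$ for a worse constant, and one optimizes over the number of iterations in terms of $\alpha$.

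**Tracking the $\epsilon$ and $\alpha$ dependence.** The factor $\epsilon^{-2\alpha}$ and the $\max\{\mu^{1/2},\mu\}$ suggest that the $\dot H^{-1}\to L^2$ passage is done not by the crude Poincaré inequality above but by a frequency truncation at scale $\sim \epsilon^{-1}$ (natural because the $BV$ bound $\beta$ enters the energy with a weight $\epsilon$): split $\sum_{k\ne0}|\hat f(k)|^2 = \sum_{|k|\le N} + \sum_{|k|>N}$; the low modes are bounded by $N^2 \|\nabla f\|_{\dot H^{-1}}^2 \lesssim N^2\mu$, and the high modes by the $BV$/Sobolev embedding contribution $\lesssim \beta^2/N$ or similar. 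Optimizing $N$ relates things to $\epsilon$, producing the $\epsilon^{-2\alpha}$ with $\alpha$ reflecting how aggressively one truncates. The main obstacle — and the genuine novelty — is establishing the fractional chain-rule estimate in $\dot H^{-1}$ with explicit, trackable dependence of the exponent $\theta(d)$ and constant $C_0$ on the polynomial degree; this is presumably the content of the lemmas in Sections \ref{sec:chain}–\ref{sec:chain_proof} that precede this proposition's proof, and I would isolate it as the key input, reducing Proposition \ref{prop:H-1aim} to a finite iteration of that lemma together with the elementary truncation/interpolation steps above.
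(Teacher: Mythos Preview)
Your high-level intuition is sound --- the hypothesis does force $f_1$ to be ``almost a function of $x_2$'' and $f_2 = g(f_1)$ ``almost a function of $x_1$'', and the nonlinearity forces near-constancy. But the concrete mechanism you propose, a chain rule of the form
\[
\|\p_1 f\|_{\dot H^{-1}} \le \eta \ \Longrightarrow \ \|\p_1 p(f)\|_{\dot H^{-1}} \lesssim \eta^{\theta},
\]
is not what the paper proves and, without invoking the $BV$ bound, is simply false. Take $f(x)=\cos(2\pi x_1)\cos(2\pi N x_2)$: then $\|\p_1 f\|_{\dot H^{-1}}\sim N^{-1}$, yet $f^2$ contains the term $\tfrac14\cos(4\pi x_1)$, giving $\|\p_1 f^2\|_{\dot H^{-1}}\gtrsim 1$. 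The problem is that multiplication by an $L^\infty$ function is not bounded on $\dot H^{-1}$, so $\p_1(f^2)=2f\,\p_1 f$ does not inherit smallness. Your identity $\|\p_1 f\|_{\dot H^{-1}}^2+\|\p_2 f\|_{\dot H^{-1}}^2=\|f-\bar f\|_{L^2}^2$ is correct, but getting control of $\|\p_1 f_2\|_{\dot H^{-1}}$ is exactly the missing step.

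What the paper actually does is a geometric bootstrap on Fourier \emph{supports}, not on $\dot H^{-1}$ norms. With $\mu=\epsilon^\alpha$, $\mu_2=\epsilon^{-1+2\alpha}$, the $\dot H^{-1}$ bound confines $\hat f_1$ to a cone $C_{1,\mu,\mu_2}=\{|k_1|\le \mu|k|,\ |k|\le \mu_2\}$ around the $k_2$-axis, while the $BV$ bound supplies the radial cutoff (Lemma~\ref{lem:imp_comp1}). The key geometric step (Lemma~\ref{lem:iteration1}) is that since $g$ is a polynomial of degree $d$, the Fourier support of $g(\chi_{1,\mu,\mu_2}(D)f_1)$ lies in the $d$-fold Minkowski sum of this truncated cone, hence in $\{|k_1|\le d\mu\mu_2\}$. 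But $f_2=g(f_1)$ is simultaneously confined to the transversal cone $C_{2,\mu,\mu_2}$ around the $k_1$-axis; intersecting forces $\hat f_2$ into a strictly smaller truncated cone $C_{2,\mu,\mu_3}$ with $\mu_3\sim d\mu\mu_2\ll\mu_2$. One then swaps roles using $f_1=h(f_2)$ and iterates (Lemma~\ref{lem:induction}); after $m\sim 1/\alpha$ steps the cone collapses to the origin. The exponent loss you anticipated does occur, but for a different reason: the multipliers $\chi_{j,\mu,\mu_m}(D)$ are not bounded on $L^\infty$, so comparing $g(\chi_{1,\mu,\mu_2}(D)f_1)$ with $g(f_1)$ requires Mihlin--H\"ormander $L^p$ bounds plus interpolation (Lemma~\ref{lem:poly_estimate}), which costs a factor $(1-\gamma)$ in the exponent at each step. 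The final constant $(4C_0/\alpha^{4d})^{1/\alpha}$ records both the $\sim 1/\alpha$ iterations and the $L^p$-constant blowup as $p\to\infty$.
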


\begin{rmk}
We remark that the estimate \eqref{eq:H-1aim2} in particular implies lower bounds in $\epsilon>0$ which are more slowly decreasing than any polynomial power of $\epsilon$ (if $\|f_1-c_0\|^2_{L^2} \geq \tilde{c}>0$ independently of $\epsilon>0$).
\end{rmk}

Throughout this section, all the norms are restricted to $\T^2$.
Also, for the sake of simplicity, we assume $g$ and $h$ to be polynomials of the same degree $d\in\N$, $d\ge 2$. Without loss of generality, we may further assume that $f_1 \neq 0 \neq f_2$ since else the statement follows directly.

We will use that the first inequality in \eqref{eq:H-1aim1} can be phrased in the following two equivalent formulations
$$
\|\p_1 f_1\|^2_{\dot{H}^{-1}} + \|\p_2 g(f_1)\|^2_{\dot{H}^{-1}} \leq \delta
\quad \text{and} \quad
\|\p_1 h(f_2)\|^2_{\dot{H}^{-1}} + \|\p_2 f_2\|^2_{\dot{H}^{-1}} \leq \delta.
$$
This will yield intermediate bounds on the sets where the $L^2$-mass of $\hat f_1$ and $\hat f_2$ concentrate which will lead to \eqref{eq:H-1aim2} thanks to a bootstrap argument.

Optimizing the right-hand-side of \eqref{eq:H-1aim2} in the parameter $\alpha$, as a consequence of Proposition \ref{prop:H-1aim} we obtain the desired lower-bound estimate of Theorem \ref{thm:main} for the case $f_2=\chi_{1,1}$ and $f_1=\chi_{2,2}$ (see Section \ref{sec:lower_proof}).

\begin{rmk}
In the afore mentioned case $f_1=\chi_{2,2}$ and $f_2=\chi_{1,1}$, the polynomials $g$ and $h$ for which the relations $f_2=g(f_1)$ and $f_1=h(f_2)$ hold true can be chosen via interpolation, \emph{e.g.}
$$
g(t)=\frac{5}{12}x^3-\frac{41}{12}x,
\quad
h(t)=-g(t).
$$
Such choices of $g$ and $h$ work both in the Dirichlet and in the periodic settings since from the symmetry of Tartar's square $h(0)=g(0)=0$. We emphasize that in the setting of the Tartar square the choice of the functions $g,h$ is extremely non-unique.
\end{rmk}

\subsection{Preliminary considerations}
\label{sec:lower1}
Given two parameters $\mu,\mu_2>0$, we define the following compact cones in frequency space (see Figure \ref{fig:cones1})
$$
C_{1,\mu,\mu_2}:=\{k \in \R^2: \ |k_1| \leq \mu |k|,\, |k|\le\mu_2\}
\quad \text{and} \quad
C_{2,\mu,\mu_2}:=\{k \in \R^2: \ |k_2| \leq \mu |k|,\, |k|\le\mu_2\}
$$
and let $\chi_{1,\mu,\mu_2}$, $\chi_{2,\mu,\mu_2}$ be smoothed out characteristic functions of $C_{1,\mu,\mu_2}$, $C_{2,\mu,\mu_2}$, respectively, i.e. we choose $\chi_{1,\mu,\mu_2}$ as functions depending only on $\frac{k}{|k|}$ such that they are equal to one on the cones $C_{1,\mu,\mu_2}$, $C_{2,\mu,\mu_2}$, respectively, vanish outside of a slight thickening of these and are smooth outside of the origin.
With the notation $\chi_{1,\mu,\mu_2}(D)$, $\chi_{2,\mu,\mu_2}(D)$ we will denote the corresponding Fourier multipliers; \emph{i.e.}, 
$$
\chi_{j,\mu,\mu_2}(D)f(x)=\sum_{k\in\Z^2}\chi_{j,\mu,\mu_2}(k)\hat f(k) e^{2\pi \,i\, k\cdot x}, \text{ for } j=1,2
$$
for every $f\in L^2(\T^2)$.

\begin{figure}[t]
\includegraphics{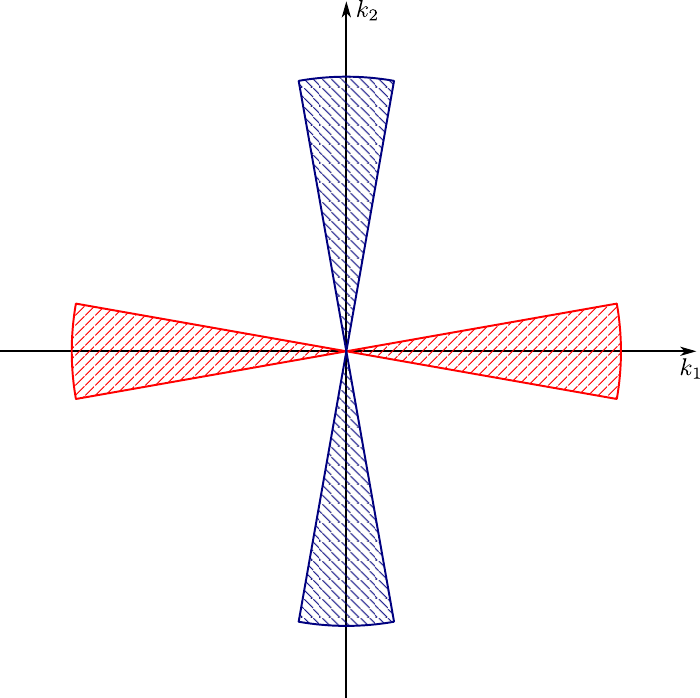}
\caption{The cones $C_{1,\mu,\mu_2}$ (blue) and $C_{2,\mu,\mu_2}$ (red), respectively.}
\label{fig:cones1}
\end{figure}

We begin our bootstrap argument, which eventually leads to the proof of Proposition \ref{prop:H-1aim}, by observing that the functions $f_1$ and $f_2$ concentrate their mass in the cones $C_{1,\mu,\mu_2}$ and $C_{2,\mu,\mu_2}$, respectively.

\begin{lem}\label{lem:imp_comp1}
Let $f_1, f_2$ and $g$ be as in the statement of Proposition \ref{prop:H-1aim}.
Then, for every $\mu, \mu_2>0$ there hold
\begin{align}
\label{eq:elastic_energy1}
\|f_1-\chi_{1,\mu,\mu_2}(D)f_1\|^2_{L^2} + \|g(f_1)-\chi_{2,\mu,\mu_2}(D)g(f_1)\|^2_{L^2} &\leq C\big(\mu^{-2} \delta+\mu_2^{-1}\beta\big) \\
\intertext{and}
\label{eq:elastic_energy2}
\|h(f_2)-\chi_{1,\mu,\mu_2}(D)h(f_2)\|^2_{L^2} + \|f_2-\chi_{2,\mu,\mu_2}(D)f_2\|^2_{L^2} &\leq C\big(\mu^{-2} \delta+\mu_2^{-1}\beta\big),
\end{align}
where $C>0$ is a constant depending on $\|f_1\|_{L^\infty}$, $\|f_2\|_{L^\infty}$, $g$ and $h$.
\end{lem}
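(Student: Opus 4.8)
The plan is to exploit the fact that on the complement of the cone $C_{1,\mu,\mu_2}$ in frequency space, either $|k_1|$ is comparable to $|k|$ (so the $\dot H^{-1}$-control of $\p_1 f_1$ is available and efficient) or $|k|$ is large (so the BV bound controls the high-frequency tail). More precisely, I would split the complement $\Z^2\setminus C_{1,\mu,\mu_2}$ into the two regions $A:=\{|k_1|>\mu|k|\}$ and $B:=\{|k|>\mu_2\}$ (a slightly fattened version of the latter to account for the smoothing of $\chi_{1,\mu,\mu_2}$), and estimate the $L^2$-mass of $f_1$ on each. On $A$ we write, using Lemma \ref{lem:e_el-old}'s identity $\|\p_1 f_1\|_{\dot H^{-1}}^2 = \sum_{k\neq 0} \frac{k_1^2}{|k|^2}|\hat f_1(k)|^2$, that
$$
\sum_{k\in A} |\hat f_1(k)|^2 \le \mu^{-2} \sum_{k\in A} \frac{k_1^2}{|k|^2} |\hat f_1(k)|^2 \le \mu^{-2} \|\p_1 f_1\|_{\dot H^{-1}}^2 \le \mu^{-2}\delta .
$$
On $B$ I would use the standard Bernstein-type bound $\sum_{|k|>\mu_2}|\hat f_1(k)|^2 \le \mu_2^{-1} \sum_k |k|\, |\hat f_1(k)|^2 \lesssim \mu_2^{-1}\|f_1\|_{L^\infty}\|\nabla f_1\|_{TV}$, which follows from $\||k|^{1/2}\hat f_1\|_{\ell^2}^2 = \|f_1\|_{\dot H^{1/2}}^2 \lesssim \|f_1\|_{L^\infty}\|f_1\|_{BV}$ by interpolation between $L^2$ and $BV$ (or equivalently $|k||\hat f_1(k)| \lesssim \|\nabla f_1\|_{TV}$ together with $|\hat f_1(k)|\lesssim\|f_1\|_{L^\infty}$ and summing the geometric-type series). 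This gives the bound $\mu_2^{-1}\beta$ up to a constant depending on $\|f_1\|_{L^\infty}$. Together with the observation that $\chi_{1,\mu,\mu_2}(D)$ is a bounded multiplier equal to $1$ on $C_{1,\mu,\mu_2}$, these two estimates yield $\|f_1 - \chi_{1,\mu,\mu_2}(D)f_1\|_{L^2}^2 \lesssim \mu^{-2}\delta + \mu_2^{-1}\beta$.

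For the term $\|g(f_1) - \chi_{2,\mu,\mu_2}(D)g(f_1)\|_{L^2}^2$ I would argue identically, now using the hypothesis $\|\p_2 g(f_1)\|_{\dot H^{-1}}^2 \le \delta$ (the equivalent reformulation of \eqref{eq:H-1aim1} recorded just before the statement) on the region $\{|k_2|>\mu|k|\}$, and using that $g(f_1) \in L^\infty\cap BV$ with $\|g(f_1)\|_{L^\infty}\le C(g,\|f_1\|_{L^\infty})$ and $\|\nabla g(f_1)\|_{TV}\le C(g,\|f_1\|_{L^\infty})\|\nabla f_1\|_{TV}\le C\beta$ on the high-frequency region; here one needs a chain-rule / composition estimate for BV functions composed with the Lipschitz-on-compacts polynomial $g$, which is standard since $f_1$ is bounded. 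Inequality \eqref{eq:elastic_energy2} is proved in exactly the same way with the roles of $f_1,g$ replaced by $f_2,h$, using the second reformulation of \eqref{eq:H-1aim1}. The only mild subtlety, and the place where one must be a little careful, is matching the smoothed multiplier $\chi_{j,\mu,\mu_2}(D)$ (which lives on a slight thickening of the cone and is only $1$ on the cone itself) with the sharp cone decomposition used in the frequency estimates; this is handled by absorbing the thin transition region into the region $B$ at the cost of enlarging the universal constant $C$, and by using that $\mu_2$-thickening only changes $\mu_2$ by a universal factor. I expect no real obstacle here — the content is entirely in (i) the frequency localization inequalities above and (ii) the elementary BV composition bound — and the lemma then feeds directly into the bootstrap of the next subsection.
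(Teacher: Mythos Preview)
Your proposal is correct and follows essentially the same approach as the paper: both arguments split the complement of the cone into the angular region $\{|k_j|>\mu|k|\}$ (controlled by the $\dot H^{-1}$ hypothesis exactly as you write) and the high-frequency region $\{|k|>\mu_2\}$ (controlled by the $BV$--$L^\infty$ bound). The paper obtains the high-frequency estimate directly via the finite-difference inequality $\|f-f(\cdot+c)\|_{L^2}^2 \le \|f\|_{L^\infty}|c|\,\|\nabla f\|_{TV}$ averaged over $c\in\partial B_{\mu_2^{-1}}$, which is precisely a proof of the interpolation bound $\|f\|_{\dot H^{1/2}}^2\lesssim\|f\|_{L^\infty}\|\nabla f\|_{TV}$ that you invoke; your parenthetical ``geometric-type series'' alternative, however, does not sum in two dimensions and should be dropped.
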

\begin{proof}
Note first that, from $f_2=g(f_1)$ and $f_1=h(f_2)$, \eqref{eq:elastic_energy1} and \eqref{eq:elastic_energy2} are equivalent.
Therefore, it is sufficient to prove \eqref{eq:elastic_energy1}.
We divide the proof into two steps.

\emph{Step 1.} Arguing as in \cite[proof of Lemma 4.3]{KKO13}, for every $c\in\R^2$ we have that
\begin{equation*}
\begin{aligned}
\|\nabla f_1\|_{TV} &\ge \|f_1\|_{L^\infty}^{-1}\frac{1}{|c|}\int_{\T^2}|f_1-f_1(\cdot+c)|^2dx = \|f_1\|_{L^\infty}^{-1}\frac{1}{|c|} \sum_{k\in\Z^2}|(1-e^{i c\cdot k})\hat f_1(k)|^2 \\
&\ge \|f_1\|_{L^\infty}^{-1}\frac{1}{|c|} \sum_{k\in\Z^2,\, |k|>\frac{1}{L}}|(1-e^{i c\cdot k})\hat f_1(k)|^2
\end{aligned}
\end{equation*}
for every $L>0$.
Integrating over $\p B_L$ with $|c|=L$, we deduce that
$$
L^2 \|\nabla f_1\|_{TV}\ge \|f_1\|_{L^\infty}^{-1}\sum_{k\in\Z^2,\,|k|>\frac{1}{L}}|\hat f_1(k)|^2 \int_{\p B_L}|1-e^{ic\cdot k}|^2 dc \ge \|f_1\|_{L^\infty}^{-1} L \sum_{k\in\Z^2,\,|k|>\frac{1}{L}}|\hat f_1(k)|^2.
$$
Choosing $L=\mu_2^{-1}$, we infer
\begin{equation}\label{eq:lem_prel1}
\sum_{k\in\Z^2,\,|k|>\mu_2}|\hat f_1(k)|^2+\big|\mathcal{F}\big(g(f_1)\big)\big(k)|^2 \le C \mu_2^{-1}\beta.
\end{equation}

\emph{Step 2.} Passing to the frequency space we get
\begin{align*}
\|\p_1 f_1\|^2_{\dot{H}^{-1}} + \|\p_2 g(f_1)\|^2_{\dot{H}^{-1}} &= \sum_{k\in\Z^2\setminus\{(0,0)\}}\frac{k_1^2}{|k|^2}|\hat f_1(k)|^2 +\frac{k_2^2}{|k|^2}\big|\mathcal{F}\big(g(f_1)\big)(k)\big|^2 \\
& \ge \sum_{k\in\Z^2\setminus C_{1,\mu,\mu_2}}\frac{k_1^2}{|k|^2}|\hat f_1(k)|^2 + \sum_{k\in\Z^2\setminus C_{2,\mu,\mu_2}}\frac{k_2^2}{|k|^2}\big|\mathcal{F}\big(g(f_1)\big)(k)\big|^2 \\
& \ge \mu^2 \sum_{k\in\Z^2\setminus C_{1,\mu,\mu_2}}|\hat f_1(k)|^2 + \mu^2 \sum_{k\in\Z^2\setminus C_{2,\mu,\mu_2}} \big|\mathcal{F}\big(g(f_1)\big)\big(k)|^2 \\
&= \mu^2\Big(\|f-\chi_{1,\mu,\mu_2}(D)f\|^2_{L^2} + \|g(f_1)-\chi_{2,\mu,\mu_2}(D)g(f_1)\|^2_{L^2}\Big).
\end{align*}
Combining the inequality above and \eqref{eq:lem_prel1}, by \eqref{eq:H-1aim1} we obtain \eqref{eq:elastic_energy1}.
\end{proof}

Next, we seek to improve the control on the Fourier supports of $f_1$ and $f_2$ iteratively. To this end, as a crucial observation, we use that the Fourier support of $f_2$ is essentially obtained through a nonlinear function interacting with $f_1$.
If $f_1$ were such that $\hat f_1\in L^\infty(\T^2)$, this would be a consequence of the local Lipschitz continuity of $g$: Indeed, by \eqref{eq:elastic_energy1}, the fact that $f_2 = g(f_1)$ and the triangle inequality we would obtain
\begin{align}
\label{eq:compare}
\begin{split}
&\|g(\chi_{1,\mu,\mu_2}(D)f_1)- \chi_{2,\mu,\mu_2}(D)g(f_1)\|^2_{L^2}\\
&\quad \leq 2\|g(f_1)- \chi_{2,\mu,\mu_2}(D)g(f_1)\|^2_{L^2} + 2\|g(f_1)-g(\chi_{1,\mu,\mu_2}(D)f_1)\|^2_{L^2}\\
&\quad \leq 2\|g(f_1)- \chi_{2,\mu,\mu_2}(D)g(f_1)\|^2_{L^2} + 2C\|f_1-\chi_{1,\mu,\mu_2}(D)f_1\|^2_{L^2} \\
&\quad \lesssim \mu^{-2}\delta+\mu_2^{-1}\beta.
\end{split}
\end{align}
The same estimate would follow if $g$ was \emph{globally} a Lipschitz function, without requiring any further assumptions on $f_1$.

In our application we work with nonlinear functions $g$ which are only \emph{locally} Lipschitz (cubic polynomials) and we do not a priori know that $\hat f_1\in L^\infty(\T^2)$.
Hence, even though $f_1 \in L^{\infty}$, in our setting, we cannot directly proceed as in \eqref{eq:compare}, since Fourier multipliers are in general not bounded as maps from $L^{\infty}$ to $L^{\infty}$.
Yet we can still control the left-hand-side of \eqref{eq:compare} in a similar way, obtaining a (small) loss (see Corollary \ref{cor:poly_estimate}).

More precisely, in order to remedy the lack of $L^{\infty}$ bounds for $\chi_{1,\mu,\mu_2}(D)f_1$ and hence the lack of direct Lipschitz continuity arguments, we make use of Calder\'on-Zygmund estimates in $L^p$ spaces with $p\in (1,\infty)$ and interpolation. While this gives rise to a small loss, it will provide our replacement of \eqref{eq:compare} in Corollary \ref{cor:poly_estimate}.

\begin{lem}\label{lem:poly_estimate}
Let $f_1, f_2, g$ and $h$ be as in the statement of Proposition \ref{prop:H-1aim}.
Then for every $\mu,\mu'>0$ and any $\gamma \in (0,1) $ there hold
\begin{align}
\label{eq:Lip1}
\|g(f_1)-g(\chi_{1,\mu,\mu'}(D)f_1)\|_{L^2} \le \frac{C'}{\gamma^d}\|f_1-\chi_{1,\mu,\mu'}(D)f_1\|_{L^2}^{1-\gamma},
\end{align}
and
\begin{align}
\label{eq:Lip2}
\|h(f_2)-h(\chi_{2,\mu,\mu'}(D)f_2)\|_{L^2} \le \frac{C'}{\gamma^d}\|f_2-\chi_{2,\mu,\mu'}(D)f_2\|_{L^2}^{1-\gamma},
\end{align}
with $C'>0$ being a constant depending on $\|f_1\|_{L^\infty}$, $\|f_2\|_{L^\infty}$, $g$, $h$ and $d$.
\end{lem}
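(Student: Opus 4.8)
The plan is to estimate $\|g(f_1)-g(\chi_{1,\mu,\mu'}(D)f_1)\|_{L^2}$ by writing $w:=f_1-\chi_{1,\mu,\mu'}(D)f_1$, exploiting that $g$ is a polynomial of degree $d$ with $g(0)=0$, so that $g(a)-g(b)$ factors through $(a-b)$ with the remaining factor being a polynomial in $a$ and $b$ of degree $d-1$. Concretely, $g(f_1)-g(\chi_{1,\mu,\mu'}(D)f_1)=w\cdot Q$, where $Q$ is a polynomial expression in $f_1$ and $\chi_{1,\mu,\mu'}(D)f_1$ of degree $d-1$. Then I would use H\"older's inequality with exponents $(p,p')$ to get $\|wQ\|_{L^2}\le \|w\|_{L^{2p'}}\|Q\|_{L^{2p}}$ for a well-chosen $p$ close to $1$ (so $p'$ large), and control $\|Q\|_{L^{2p}}$ by $\|f_1\|_{L^\infty}$ together with the $L^{2p(d-1)}$-norm of $\chi_{1,\mu,\mu'}(D)f_1$. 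The latter is the point where Calder\'on--Zygmund estimates enter: since $\chi_{1,\mu,\mu'}$ is a (homogeneous, smooth away from the origin) Fourier multiplier, $\chi_{1,\mu,\mu'}(D)$ is bounded on $L^q(\T^2)$ for every $q\in(1,\infty)$, with an operator norm that blows up like a power of $q$ as $q\to\infty$ (the standard $C q$ growth of the Calder\'on--Zygmund constant). Hence $\|\chi_{1,\mu,\mu'}(D)f_1\|_{L^q}\le Cq\|f_1\|_{L^q}\le Cq\|f_1\|_{L^\infty}$ up to the unit-volume of $\T^2$.

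Next I would interpolate to convert the high-$L^p$ control of $w$ back into the $L^2$-norm with a small exponent loss. Using the $L^\infty$ bound $\|w\|_{L^\infty}\le \|f_1\|_{L^\infty}+\|\chi_{1,\mu,\mu'}(D)f_1\|_{L^\infty}$ — here one has to be slightly careful since $\chi_{1,\mu,\mu'}(D)$ is \emph{not} bounded on $L^\infty$; instead one bounds $\|w\|_{L^\infty}$ crudely by noting $w$ is a frequency truncation of a BV$\cap L^\infty$ function, or simply absorbs this into the constant $C'$ via a fixed a priori bound depending on $\|f_1\|_{L^\infty}$ and the multiplier (e.g.\ via the BV bound $\beta$, but in fact the cleaner route is to interpolate $L^{2p'}$ between $L^2$ and some fixed large but finite $L^{q_0}$). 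Interpolating $\|w\|_{L^{2p'}}\le \|w\|_{L^2}^{\theta}\|w\|_{L^{q_0}}^{1-\theta}$ with $\theta$ close to $1$ and $q_0$ fixed, and using the fixed-$q_0$ Calder\'on--Zygmund bound $\|w\|_{L^{q_0}}\le C$, one arrives at $\|g(f_1)-g(\chi_{1,\mu,\mu'}(D)f_1)\|_{L^2}\le C\|w\|_{L^2}^{1-\gamma}$ for a parameter $\gamma$ that can be made as small as desired at the price of a constant; tracking the $q$-dependence of the Calder\'on--Zygmund norm through the choices of exponents produces exactly the factor $\gamma^{-d}$. The estimate \eqref{eq:Lip2} follows by the identical argument with $h$ in place of $g$, $f_2$ in place of $f_1$ and $\chi_{2,\mu,\mu'}$ in place of $\chi_{1,\mu,\mu'}$, since $h$ is likewise a degree-$d$ polynomial with $h(0)=0$.

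The main obstacle, and the reason for the $\gamma^{-d}$ loss, is precisely the failure of $\chi_{j,\mu,\mu'}(D)$ to be bounded on $L^\infty$: one cannot simply say that $\chi_{1,\mu,\mu'}(D)f_1$ inherits the $L^\infty$ bound of $f_1$, so the naive Lipschitz estimate \eqref{eq:compare} is unavailable, and one must instead pay for each of the $d-1$ extra polynomial factors by a high-$L^p$ norm whose constant grows in $p$. The delicate bookkeeping is to choose the H\"older exponent $p=p(\gamma)$ and the interpolation parameter $\theta=\theta(\gamma)$ so that (i) the power of $\|w\|_{L^2}$ is exactly $1-\gamma$, and (ii) the accumulated constant — a product of the Calder\'on--Zygmund norms raised to powers coming from the $d-1$ factors — is of order $\gamma^{-d}$ (up to a constant depending only on $\|f_1\|_{L^\infty},\|f_2\|_{L^\infty},g,h,d$). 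Once the exponents are fixed, the remaining computation is routine application of H\"older's and the interpolation inequalities.
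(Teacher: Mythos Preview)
Your proposal is correct and follows essentially the same route as the paper: factor $g(a)-g(b)=(a-b)G(a,b)$ with $G$ a polynomial of degree $d-1$, apply H\"older to separate $w=f_1-\chi_{1,\mu,\mu'}(D)f_1$ from $G$, control the high-$L^p$ norms of $\chi_{1,\mu,\mu'}(D)f_1$ via the Mihlin--H\"ormander multiplier theorem (with the $O(q)$ growth of the constant as $q\to\infty$), and interpolate the resulting $L^{2p'}$ norm of $w$ back to $L^2$ at the cost of the exponent loss $1-\gamma$.

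The only minor difference is in the choice of interpolation endpoint for $\|w\|_{L^{2p'}}$: you propose a \emph{fixed} large $q_0$, whereas the paper interpolates between $L^2$ and $L^{\frac{2+2\gamma}{\gamma}}$, i.e.\ the \emph{same} $\gamma$-dependent exponent already used for $G$. Both choices work; the paper's is slightly cleaner bookkeeping because all high-$L^p$ norms are at one exponent, so a single application of the multiplier bound $C(\gamma,d)\le Cd/\gamma$ handles both the $\|w\|_{L^{(2+2\gamma)/\gamma}}^\gamma$ factor and the $\|\chi_{1,\mu,\mu'}(D)f_1\|_{L^{(2+2\gamma)(d-1)/\gamma}}^j$ factors, yielding directly $C'=(Cd)^{d+1}$ and the prefactor $\gamma^{-d}$. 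Your detour via $\|w\|_{L^\infty}$ or the BV bound is, as you yourself note, the wrong track and should simply be dropped.
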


\begin{proof}
It is sufficient to prove the statement for $g(t)=t^d$ for some $d\in\N$, $d\ge2$.
Using the fact that $a^d-b^d=(a-b)G(a,b)$ where $G(a,b)=\sum\limits_{j=0}^{d-1} a^{d-1-j}b^{j}$ is $(d-1)$-homogeneous, by Hölder's inequality we obtain
\begin{align*}
\|g(f_1)-g(\chi_{1,\mu,\mu_2}(D)f_1)\|_{L^2} &= \|(f_1 - \chi_{1,\mu,\mu_2}(D)f_1)G(f_1,\chi_{1,\mu,\mu_2}(D)f_1) \|_{L^2} \\
&\leq \|f_1 - \chi_{1,\mu,\mu_2}(D)f_1\|_{L^{2+2\gamma}} \|G(f_1,\chi_{1,\mu,\mu_2}(D)f_1)\|_{L^{\frac{2+2\gamma}{\gamma}}}
\end{align*}
for any $\gamma\in(0,1)$ (to be fixed later).
By means of $L^p$ interpolation (see for instance \cite[Proposition 1.1.14]{Grafakos}) we get
\begin{multline*}
\|g(f_1)-g(\chi_{1,\mu,\mu_2}(D)f_1)\|_{L^2} \\
\le \|f_1-\chi_{1,\mu,\mu_2}(D)f_1\|_{L^2}^{1-\gamma} \|f_1-\chi_{1,\mu,\mu_2}(D)f_1\|_{L^{\frac{2+2\gamma}{\gamma}}}^\gamma \|G(f_1,\chi_{1,\mu,\mu_2}(D)f_1)\|_{L^{\frac{2+2\gamma}{\gamma}}}.
\end{multline*}
Invoking Hölder's inequality and the explicit form of $G(a,b)$, we further infer
\begin{align*}
\|G(f_1,\chi_{1,\mu,\mu_2}(D)f_1)\|_{L^{\frac{2+2\gamma}{\gamma}}}
&\leq \sum\limits_{j=0}^{d-1} \|f_1^{d-1-j}(\chi_{1,\mu,\mu_2}(D)f_1)^j\|_{L^{\frac{2+2\gamma}{\gamma}}}\\
&\leq \sum\limits_{j=0}^{d-1} \|f_1\|_{L^{\frac{(2+2\gamma)(d-1)}{\gamma}}}^{d-1-j} \|\chi_{1,\mu,\mu_2}(D)f_1\|_{L^{\frac{(2+2\gamma)(d-1)}{\gamma}}}^j.
\end{align*}
The $L^p$-$L^p$ boundedness of Fourier multipliers (see the Mihlin-Hörmander multiplier theorem, for instance, in \cite[Theorem 5.2.7]{Grafakos}) implies for each $j\in \{0,\dots,d-1\}$
\begin{align*}
&\|(1-\chi_{1,\mu,\mu_2}(D))f_1\|_{L^\frac{2+2\gamma}{\gamma}}^{\gamma}\|f_1\|_{L^{\frac{(2+2\gamma)(d-1)}{\gamma}}}^{d-1-j} \|\chi_{1,\mu,\mu_2}(D)f_1\|_{L^{\frac{(2+2\gamma)(d-1)}{\gamma}}}^j\\
&\le C(\gamma,d)^{j+\gamma}\|f_1\|_{L^\frac{(2+2\gamma)(d-1)}{\gamma}}^{d-1} \|f_1\|_{L^\frac{2+2\gamma}{\gamma}}^{\gamma} \leq C(\gamma,d)^{j+\gamma} \|f_1\|_{L^{\infty}}^{d-1+\gamma}.
\end{align*}
Here $1<C(\gamma,d)\leq \frac{Cd}{\gamma}$ (which follows from the Mihlin-Hörmander multiplier theorem, for instance, in \cite[Theorem 5.2.7, equation (5.2.12)]{Grafakos}) and is, in particular, independent of $\mu$ and $\mu_2$.
Combining the previous inequalities we obtain \eqref{eq:Lip1} with $C'=(Cd)^{d+1}$.
Working analogously we infer \eqref{eq:Lip2}.
\end{proof}

As a direct generalization of the previous result, we state an immediate corollary (our replacement of the estimate \eqref{eq:compare}) which we will use in the next subsection.

\begin{cor}\label{cor:poly_estimate}
Let $f_1, f_2, g$ and $h$ be as in the statement of Proposition \ref{prop:H-1aim}.
Then for every $\mu,\mu_2>0$ and any $\gamma \in (0,1) $ there hold
\begin{equation}\label{eq:compare_modified_1}
\|g(\chi_{1,\mu,\mu_2}(D)f_1)- \chi_{2,\mu,\mu_2}(D)g(f_1)\|^2_{L^2}
\leq \frac{C_0 }{\gamma^{2d}}\max\big\{\big(\mu^{-2}\delta + \mu_2^{-1} \beta\big)^{1-\gamma},\mu^{-2}\delta + \mu_2^{-1} \beta\big\}
\end{equation}
and
\begin{equation}\label{eq:compare_modified_2}
\|h(\chi_{2,\mu,\mu_2}(D)f_2)-\chi_{1,\mu,\mu_2}(D)h(f_2)\|^2_{L^2} \leq \frac{C_0 }{\gamma^{2d}}\max\big\{\big(\mu^{-2}\delta + \mu_2^{-1}\beta\big)^{1-\gamma},\mu^{-2}\delta + \mu_2^{-1}\beta\big\},
\end{equation}
with $C_0>0$ being a constant depending on $\|f_1\|_{L^\infty}$, $\|f_2\|_{L^\infty}$, $g$, $h$ and $d$.
\end{cor}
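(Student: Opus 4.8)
The goal is to upgrade the single estimate of Lemma~\ref{lem:poly_estimate} into the two-sided comparison~\eqref{eq:compare_modified_1}--\eqref{eq:compare_modified_2}, i.e.\ to replace \eqref{eq:compare} in the genuinely \emph{locally} Lipschitz setting. The plan is to run exactly the triangle-inequality computation sketched in~\eqref{eq:compare}, but to substitute the clean bound $\|g(f_1)-g(\chi_{1,\mu,\mu_2}(D)f_1)\|_{L^2}\lesssim\|f_1-\chi_{1,\mu,\mu_2}(D)f_1\|_{L^2}$ (which required $L^\infty$-control of $\chi_{1,\mu,\mu_2}(D)f_1$) by the lossy bound~\eqref{eq:Lip1} from Lemma~\ref{lem:poly_estimate} with $\gamma\in(0,1)$ playing the role of the small loss.

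\textbf{Step 1: Reduce to monomials and split.} As in the proof of Lemma~\ref{lem:poly_estimate}, it suffices to treat $g(t)=t^d$, $h(t)=\pm t^d$, since the polynomials $g,h$ are finite linear combinations of such monomials and the triangle inequality absorbs the coefficients into the constant $C_0$. Then write, for the first inequality,
\begin{align*}
\|g(\chi_{1,\mu,\mu_2}(D)f_1)- \chi_{2,\mu,\mu_2}(D)g(f_1)\|^2_{L^2}
&\le 2\|g(f_1)-g(\chi_{1,\mu,\mu_2}(D)f_1)\|^2_{L^2}\\
&\quad + 2\|g(f_1)-\chi_{2,\mu,\mu_2}(D)g(f_1)\|^2_{L^2}.
\end{align*}

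\textbf{Step 2: Estimate each term.} The second term is controlled directly by Lemma~\ref{lem:imp_comp1}, specifically by~\eqref{eq:elastic_energy1}, giving $\lesssim \mu^{-2}\delta+\mu_2^{-1}\beta$. For the first term, apply~\eqref{eq:Lip1} with $\gamma\in(0,1)$ to get
\[
\|g(f_1)-g(\chi_{1,\mu,\mu_2}(D)f_1)\|^2_{L^2}\le \frac{(C')^2}{\gamma^{2d}}\|f_1-\chi_{1,\mu,\mu_2}(D)f_1\|_{L^2}^{2(1-\gamma)},
\]
and then again invoke~\eqref{eq:elastic_energy1} to bound $\|f_1-\chi_{1,\mu,\mu_2}(D)f_1\|_{L^2}^2\lesssim\mu^{-2}\delta+\mu_2^{-1}\beta$, so that this term is $\lesssim \gamma^{-2d}\,(\mu^{-2}\delta+\mu_2^{-1}\beta)^{1-\gamma}$. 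Combining the two, and noting that $t\mapsto\max\{t^{1-\gamma},t\}$ dominates both $t$ and $t^{1-\gamma}$ for all $t\ge0$ (the first factor for $t\le1$, the second for $t\ge1$), we obtain~\eqref{eq:compare_modified_1} with $C_0$ depending only on $\|f_1\|_{L^\infty},\|f_2\|_{L^\infty},g,h,d$. The estimate~\eqref{eq:compare_modified_2} follows in exactly the same way, using~\eqref{eq:Lip2} and~\eqref{eq:elastic_energy2} in place of~\eqref{eq:Lip1} and~\eqref{eq:elastic_energy1}, together with $f_1=h(f_2)$.

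\textbf{Main obstacle.} There is essentially no real obstacle here: the corollary is a mechanical consequence of combining Lemma~\ref{lem:imp_comp1} and Lemma~\ref{lem:poly_estimate} via the triangle inequality. The only point requiring a little care is the bookkeeping of the $\gamma$-dependence — making sure the $\gamma^{-2d}$ factor from~\eqref{eq:Lip1} is the only $\gamma$-singular contribution and that it is not made worse when passing from the monomial case to a general polynomial (it is not, since the number of monomials is fixed) — and the elementary observation that the two regimes $\mu^{-2}\delta+\mu_2^{-1}\beta\lessgtr 1$ must both be covered, which is exactly why the $\max$ appears on the right-hand side. All the genuine analytic work (the BV-to-Fourier-decay estimate, the Calderón--Zygmund/Mihlin--Hörmander input, and the interpolation loss) has already been done in the preceding lemmas.
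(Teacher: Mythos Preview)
Your proposal is correct and follows essentially the same route as the paper: split via the triangle inequality, bound the second piece by Lemma~\ref{lem:imp_comp1} (\eqref{eq:elastic_energy1}), bound the first piece by Lemma~\ref{lem:poly_estimate} (\eqref{eq:Lip1}) and then again by \eqref{eq:elastic_energy1}, and collect the two contributions under the $\max$. The only superfluous step is your reduction to monomials in Step~1: since Lemma~\ref{lem:poly_estimate} is already stated and proved for the full polynomial $g$, you can apply \eqref{eq:Lip1} directly without that reduction.
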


\begin{proof}
Thanks to the triangle inequality, \eqref{eq:elastic_energy1} and \eqref{eq:Lip1} we have
\begin{align*}
&\|g(\chi_{1,\mu,\mu_2}(D)f_1)-\chi_{2,\mu,\mu_2}(D)g(f_1)\|_{L^2}^2 \\
& \quad \le 2 \|g(\chi_{1,\mu,\mu_2}(D)f_1)-g(f_1)\|_{L^2}^2 + 2 \|g(f_1)-\chi_{2,\mu,\mu_2}(D)g(f_1)\|_{L^2}^2 \\
& \quad \le \frac{2C'^2}{\gamma^{2d}}\big(\|f_1-\chi_{1,\mu,\mu_2}(D)f_1\|_{L^2}^2\big)^{1-\gamma} + 2 C (\mu^{-2}\delta+\mu_2^{-1}\beta) \\
& \quad \le \frac{2C'^2}{\gamma^{2d}}(\mu^{-2}\delta+\mu_2^{-1}\beta)^{1-\gamma} + 2 C (\mu^{-2}\delta+\mu_2^{-1}\beta)
\end{align*}
and therefore \eqref{eq:compare_modified_1}.
An analogous argument leads to \eqref{eq:compare_modified_2}
\end{proof}

We stress that the constant $C_0$ introduced in Corollary \ref{cor:poly_estimate} is the same as that of Proposition \ref{prop:H-1aim} and it is chosen to be greater than $2C+2C'^2+2$, where $C$ and $C'$ are the constants of Lemmas \ref{lem:imp_comp1} and \ref{lem:poly_estimate}, respectively.

\subsection{A bootstrap argument}
\label{sec:lower_bootstrap}

In this section, we carry out our main bootstrap argument. Let us explain the strategy of this before formulating the precise results. It consists of three main steps:

\medskip

\emph{Step 1: The starting point.}
As our starting point, we note that Lemma \ref{lem:imp_comp1} contains the information that the $L^2$-mass of the states $f_1$ and $f_2$ concentrate (in the frequency space) on the compact cones $C_{1,\mu,\mu_2}$ and $C_{2,\mu,\mu_2}$, respectively (Figure \ref{fig:cones1}). It allows us to control the mass of $f_1$, $f_2$ \emph{outside} of these cones.
This information is a direct consequence of the inequalities in \eqref{eq:H-1aim1}, which correspond to elastic energy and surface energy controls. 

\begin{figure}[t]
\includegraphics{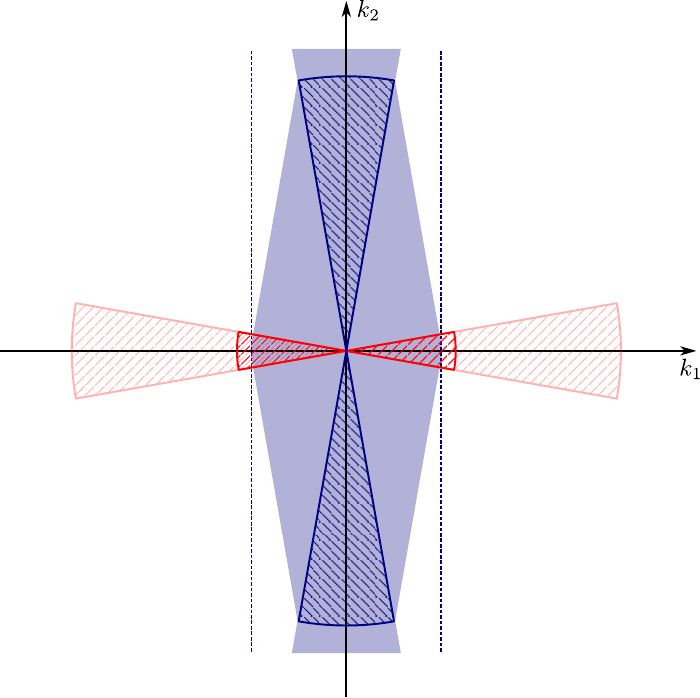}
\quad
\includegraphics{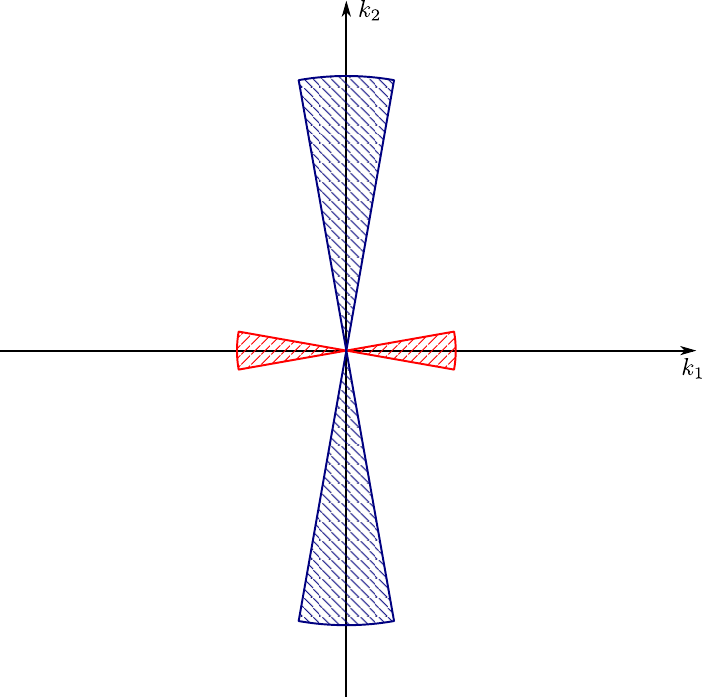}
\caption{The red and blue hashed regions depict $C_{2,\mu,\mu_3}$ and $C_{1,\mu,\mu_2}$, respectively.
The shaded light-blue region represents the Minkowski sum of $C_{1,\mu,\mu_2}$ with itself (obtained as a bound on the mass of the convolution). This implies that the mass of $C_{2,\mu,\mu_2}$ actually concentrates in the smaller red cones $C_{2,\mu,\mu_3}$ instead of the original cone $C_{2,\mu,\mu_2}$.}
\label{fig:cones2}
\end{figure}

\medskip

\emph{Step 2: Exploiting the ``determinedness'' of $f_2$ in terms of $f_1$ in the form of the estimate \eqref{eq:compare}.}
As a next step, we seek to improve the bounds on the mass concentration of $f_1$ and $f_2$ and to iteratively also control the mass of $f_1$ and $f_2$ \emph{inside} of the cones except for possible concentrations at the origin: To this end, we use that the estimates \eqref{eq:elastic_energy1} and \eqref{eq:elastic_energy2} can be improved by noting that $f_2=g(f_1)$ and $f_1=h(f_2)$ with $g$ and $h$ two polynomials. Here, an estimate of the type \eqref{eq:compare} is crucial, since it allows to compare the Fourier supports of $f_1$ and $f_2 = g(f_1)$ by viewing \eqref{eq:compare} as
\begin{align*}
\|g(\chi_{1,\mu,\mu_2}(D)f_1)-\chi_{2,\mu,\mu_2}(D)f_2 \|_{L^2}^2
\lesssim \mu^{-1}\delta + \mu_2^{-1}\beta.
\end{align*}
In particular, this implies that the Fourier support of $f_2$ in the cone $C_{2,\mu,\mu_2}$ is determined by the interaction of the nonlinearity $g$ and the Fourier support of $ f_1$ in the cone $C_{1,\mu, \mu_2}$.
More precisely, heuristically interpreting \eqref{eq:compare} as a proxy for the identity $ \chi_{2,\mu,\mu_2}(k) \F f_2(k)=\F\big(g(\chi_{1,\mu,\mu_2}(D)( f_1))(k)$, we obtain that the $L^2$-mass of $\hat f_2$ is negligible outside a suitable fattening of $C_{1,\mu,\mu_2}$ thanks to the properties of the Fourier transform and  convolution, and our choice of the parameters in the definition of our cones in \eqref{eq:parameters} below, see Figure \ref{fig:cones2}. Indeed, due to the fact that $g$ is a polynomial, the Fourier support of $f_2$ is determined by the Fourier support of $f_1$ through (a multiple) convolution. Its size can thus be estimated by the (multiple) Minkowski sum of the Fourier support of $f_1$ with itself. Now, if the opening angle of the cones is sufficiently small (which is controlled by the parameters $\mu, \mu_2$ in \eqref{eq:parameters} below), the Fourier support of $f_2$ must have been smaller than originally estimated. In other words, the support of $\F f_2$ must be localized in a new, smaller cone $C_{2,\mu,\mu_3}$ and the Fourier mass of $\F f_2$ in $C_{2,\mu,\mu_2}\setminus C_{2,\mu, \mu_3}$ is controlled in terms of the elastic and surface energies.
This observation is made precise and quantified by Lemma \ref{lem:iteration1} below. Technically this step involves slight losses in the estimates due to the fact that our nonlinearities are not globally Lipschitz continuous and arguments as in Lemma \ref{lem:poly_estimate} are required.
\medskip

\begin{figure}[t]
\includegraphics{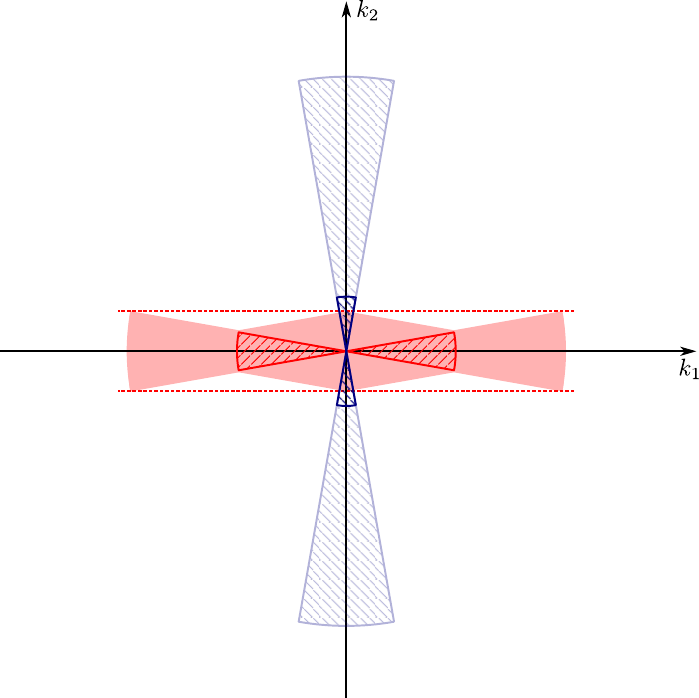}
\quad
\includegraphics{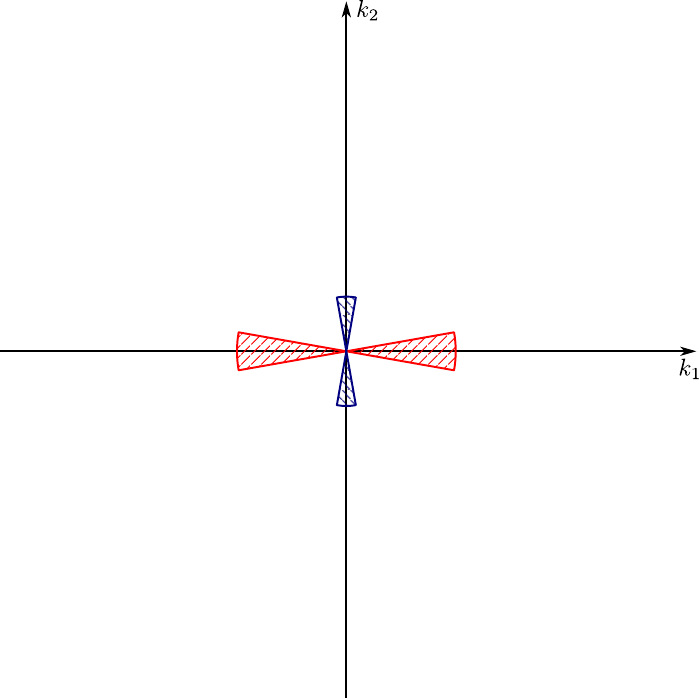}
\caption{The red and blue hashed regions depict $C_{2,\mu,\mu_3}$ and $C_{1,\mu,\mu_4}$, respectively.
The shaded light-red region represents the Minkowski sum of $C_{2,\mu,\mu_3}$ with itself.}
\label{fig:cones3}
\end{figure}

\medskip

\emph{Step 3: Iteration.}
Due to the symmetry of the properties of $f_1$ and $f_2$ it is then possible to obtain a new estimate of the type \eqref{eq:compare}, now with reversed roles for $f_1$ and $f_2$ and for $f_2$ localized to the smaller cone $C_{2,\mu, \mu_3}$
\begin{align*}
\|h(\chi_{2,\mu,\mu_3}(D)f_2)-\chi_{1,\mu,\mu_2}(D)f_1 \|_{L^2}^2
\lesssim \mu^{-1}\delta + \mu_2^{-1}\beta.
\end{align*}
Repeating the Fourier support argument from above with reversed roles for $f_1$ and $f_2$, then also implies that the mass of $f_1$ must concentrate on a smaller cone $C_{1,\mu,\mu_4}$ with $0<\mu_4<\mu_3$ (Figure \ref{fig:cones3}). 

Finally, iterating this process, we obtain that the states $f_1$ and $f_2$ concentrate in smaller and smaller cones in frequency space with corresponding $L^2$-errors which are controlled by elastic and surface energies, see the induction argument in Lemma \ref{lem:induction}.

\medskip

In the following, we make this heuristic argument precise.
To this end, from now on, we fix 
\begin{align}
\label{eq:parameters}
\mu=\epsilon^\alpha \mbox{ and }  \mu_2=\epsilon^{-1+2\alpha}.
\end{align}
Such a choice of the parameters will be clear at the final stage of the argument and will allow us to rewrite the right-hand-side of \eqref{eq:H-1aim2} with a multiple of the total energy.

\begin{lem}\label{lem:iteration1}
Let $f_1$, $g$ and $C_0>0$ be as in the statement of Proposition \ref{prop:H-1aim}.
Then there holds
\begin{multline}\label{eq:iteration1}
\|f_1-\chi_{1,\mu,\mu_2}(D)f_1\|^2_{L^2}+\|g(f_1)-\chi_{2,\mu,\mu_3}(D)g(f_1)\|^2_{L^2}
\\
\leq 4\frac{C_0 }{\gamma^{2d}}\max\big\{\big(\mu^{-2}\delta + \mu_2^{-1} \beta\big)^{1-\gamma},\mu^{-2}\delta + \mu_2^{-1} \beta\big\}.
\end{multline}
where $\mu_3:=\sqrt{2}d\epsilon^{-1+3\alpha}$.
\end{lem}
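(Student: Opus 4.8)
The plan is to combine the concentration estimate of Lemma \ref{lem:imp_comp1} with the ``approximate chain rule'' estimate of Corollary \ref{cor:poly_estimate} and then run a convolution/Minkowski-sum argument in frequency space to shrink the cone containing the Fourier mass of $g(f_1)$. First I would start from \eqref{eq:compare_modified_1}, which with the present choice of parameters \eqref{eq:parameters} reads
$$
\|g(\chi_{1,\mu,\mu_2}(D)f_1)- \chi_{2,\mu,\mu_2}(D)g(f_1)\|^2_{L^2}
\leq \frac{C_0 }{\gamma^{2d}}\max\big\{\big(\mu^{-2}\delta + \mu_2^{-1} \beta\big)^{1-\gamma},\mu^{-2}\delta + \mu_2^{-1} \beta\big\}.
$$
The key observation is that the function $w:=g(\chi_{1,\mu,\mu_2}(D)f_1)$ has Fourier support contained in the $d$-fold Minkowski sum of $C_{1,\mu,\mu_2}$ (slightly thickened) with itself, since $g$ is a polynomial of degree $d$ and multiplication in physical space is convolution in frequency space. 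A direct computation of this Minkowski sum shows that $d$-fold sums of vectors $k^{(i)}$ with $|k^{(i)}_1|\leq \mu|k^{(i)}|\lesssim \mu$ and $|k^{(i)}|\leq \mu_2$ produce vectors $k$ with $|k_1|\lesssim d\mu\mu_2$ and $|k|\leq d\mu_2$; comparing with the defining inequality $|k_1|\leq \mu|k|$ of the cone $C_{2,\mu,\cdot}$ and recalling $\mu=\epsilon^\alpha$, $\mu_2=\epsilon^{-1+2\alpha}$, one sees that such $k$ lies in the cone $C_{2,\mu,\mu_3}$ precisely with the radius threshold $\mu_3=\sqrt 2 d\,\epsilon^{-1+3\alpha}$ (the $\sqrt 2$ absorbing the slight thickening of the smoothed cones and the geometry of the Minkowski sum). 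In other words, $\chi_{2,\mu,\mu_3}(D)w=w$ up to the thickening, so that $\|w-\chi_{2,\mu,\mu_3}(D)w\|_{L^2}$ is negligible — more precisely it is controlled by the same right-hand side since $\chi_{2,\mu,\mu_3}$ majorizes the smoothed indicator of the Minkowski sum.

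Next I would insert a telescoping triangle inequality. Writing $f_2=g(f_1)$ and using $1=\chi_{2,\mu,\mu_3}(D)+(1-\chi_{2,\mu,\mu_3}(D))$, I estimate
$$
\|f_2-\chi_{2,\mu,\mu_3}(D)f_2\|_{L^2}\leq \|f_2-\chi_{2,\mu,\mu_2}(D)f_2\|_{L^2}+\|(\chi_{2,\mu,\mu_2}(D)-\chi_{2,\mu,\mu_3}(D))f_2\|_{L^2},
$$
and for the second term I compare $\chi_{2,\mu,\mu_2}(D)f_2$ with $w=g(\chi_{1,\mu,\mu_2}(D)f_1)$ via \eqref{eq:compare_modified_1}, then use that $\chi_{2,\mu,\mu_3}(D)w=w$ (up to thickening) to conclude that $(1-\chi_{2,\mu,\mu_3}(D))\chi_{2,\mu,\mu_2}(D)f_2$ is small. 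The first term on the right is bounded by \eqref{eq:elastic_energy1} of Lemma \ref{lem:imp_comp1}, namely by $C(\mu^{-2}\delta+\mu_2^{-1}\beta)$, which is in turn $\leq \frac{C_0}{\gamma^{2d}}\max\{(\mu^{-2}\delta+\mu_2^{-1}\beta)^{1-\gamma},\mu^{-2}\delta+\mu_2^{-1}\beta\}$ by the choice of $C_0\geq 2C+2C'^2+2$. Adding the bound for $\|f_1-\chi_{1,\mu,\mu_2}(D)f_1\|^2_{L^2}$, again from \eqref{eq:elastic_energy1}, and collecting the (at most four) terms each bounded by $\frac{C_0}{\gamma^{2d}}\max\{\cdots\}$ yields the factor $4$ in \eqref{eq:iteration1}.

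The main obstacle I anticipate is the careful bookkeeping of the smoothed (thickened) cones: the multipliers $\chi_{j,\mu,\mu_2}(D)$ are not sharp cutoffs, so the Minkowski-sum support statement ``$\supp \widehat w\subset C_{2,\mu,\mu_3}$'' only holds approximately, and one must verify that the thickening parameter can be chosen small enough (relative to the safety factor $\sqrt 2 d$ in the definition of $\mu_3$) that $\chi_{2,\mu,\mu_3}(D)$ genuinely acts as the identity on $w$, or else absorb the error into the right-hand side. A secondary technical point is that $\mu\to 0$ as $\epsilon\to 0$, so the cones degenerate to half-lines; one must check that the radial thresholds $\mu_2,\mu_3$ are large (indeed $\to\infty$), so the cones are nontrivial and the relevant frequencies $k\in\Z^2$ are actually captured. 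Both of these are geometric/quantitative checks rather than conceptual difficulties, and the polynomial-degree loss $\gamma^{-2d}$ is already accounted for by Corollary \ref{cor:poly_estimate}, so no new loss is incurred in this lemma.
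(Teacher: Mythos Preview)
Your overall architecture --- combine Lemma \ref{lem:imp_comp1}, Corollary \ref{cor:poly_estimate}, and a Fourier-support argument for $w:=g(\chi_{1,\mu,\mu_2}(D)f_1)$, then telescope --- is exactly the paper's strategy. However, the central geometric claim ``$\chi_{2,\mu,\mu_3}(D)w=w$'' (equivalently $\supp\widehat w\subset C_{2,\mu,\mu_3}$) is false, and the argument as written breaks at precisely this point.

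The $d$-fold Minkowski sum of $C_{1,\mu,\mu_2}$ is a region concentrated along the $k_2$-axis (with $|k_1|\le d\mu\mu_2$ but $|k_2|$ as large as $d\mu_2=d\epsilon^{-1+2\alpha}$), whereas $C_{2,\mu,\mu_3}=\{|k_2|\le\mu|k|,\ |k|\le\mu_3\}$ is a thin cone along the $k_1$-axis with $|k_2|\le\mu\mu_3=\sqrt 2\, d\,\epsilon^{-1+4\alpha}$. These sets are nearly orthogonal; the Minkowski sum is certainly not contained in $C_{2,\mu,\mu_3}$. (You also write ``the defining inequality $|k_1|\le\mu|k|$ of the cone $C_{2,\mu,\cdot}$'', but that is the definition of $C_{1}$, not $C_2$.) Hence $(1-\chi_{2,\mu,\mu_3}(D))w\neq 0$ and your telescoping step does not close.

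The correct statement, which the paper uses, is weaker and suffices: one only has $\supp\widehat w\subset\{|k_1|\le d\mu\mu_2=d\epsilon^{-1+3\alpha}\}$, and this \emph{vertical strip} is disjoint from the annular piece $C_{2,\mu,\mu_2}\setminus C_{2,\mu,\mu_3}$. Indeed, for $k\in C_{2,\mu,\mu_2}$ with $|k|>\mu_3=\sqrt 2\, d\,\epsilon^{-1+3\alpha}$, one has $|k_1|^2\ge(1-\mu^2)|k|^2>(1-\mu^2)\,2d^2\epsilon^{-2+6\alpha}>d^2\epsilon^{-2+6\alpha}$ for small $\mu$; this is precisely the origin of the $\sqrt 2$ in $\mu_3$. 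The paper therefore applies the multiplier supported on $\{|k_1|>d\epsilon^{-1+3\alpha}\}$ (which dominates $\chi_{2,\mu,\mu_2}-\chi_{2,\mu,\mu_3}$) to the estimate \eqref{eq:compare_modified_1}; the $w$-term then vanishes \emph{exactly}, and the rest of your telescoping and constant-counting goes through. Your worries about the thickening of the smoothed multipliers are secondary; the essential correction is this strip-versus-annulus geometry.
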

\begin{proof}
By the choice of the parameters $\mu$ and $\mu_2$ we get
\begin{equation}\label{eq:cones_bounds}
\max_{k\in C_{1,\mu,\mu_2}}|k_1|=\mu_2\mu=\epsilon^{-1+3\alpha}.
\end{equation}
By the properties of Fourier transform and convolution, from the fact that $g$ is polynomial of degree $d$ and from \eqref{eq:cones_bounds} we have that
\begin{equation}\label{eq:supp_bounds}
\begin{split}
\F\big(g(\chi_{1,\mu,\mu_2}(D)f_1)\big)(k)=0 &\quad \text{for } |k_1| > d\epsilon^{-1+3\alpha}.
\end{split}
\end{equation}
Now we define $\chi_{1,\epsilon}$ to be the characteristic function of $\{k\in\R^2 \,:\, |k_1|>d\epsilon^{-1+3\alpha}\}$.
From \eqref{eq:supp_bounds} and \eqref{eq:compare_modified_1} 
we infer that 
\begin{align*}
\|\chi_{1,\epsilon}(D)\chi_{2,\mu,\mu_2}(D)g(f_1)\|^2_{L^2} &= \|\chi_{1,\epsilon}(D)\big(\chi_{2,\mu,\mu_2}(D)g(f_1)-g(\chi_{1,\mu,\mu_2}(D)f_1)\big)\|^2_{L^2} \\
&\le \|\chi_{2,\mu,\mu_2}(D)g(f_1)-g(\chi_{1,\mu,\mu_2}(D)f_1)\|^2_{L^2} \\
&\leq \frac{C_0 }{\gamma^{2d}} \max\big\{\big(\delta \mu^{-2} + \mu_2^{-1}\beta\big)^{1-\gamma},\delta \mu^{-2} + \mu_2^{-1}\beta\big\}.
\end{align*}
This, together with the fact that $|\chi_{2,\mu,\mu_2}-\chi_{2,\mu,\mu_3}|\le\chi_{1,\epsilon}\chi_{2,\mu,\mu_2}$, yields
\begin{equation}\label{eq1:lem_iteration1}
\begin{split}
\|\chi_{2,\mu,\mu_2}(D)g(f_1)-\chi_{2,\mu,\mu_3}(D)g(f_1)\|^2_{L^2} &\le \|\chi_{1,\epsilon}(D)\chi_{2,\mu,\mu_2}(D)g(f_1)\|^2_{L^2} \\
&\le \frac{C_0 }{\gamma^{2d}} \max\big\{\big(\delta \mu^{-2} + \mu_2^{-1}\beta\big)^{1-\gamma},\delta \mu^{-2} + \mu_2^{-1}\beta\big\}.
\end{split}
\end{equation}
Thus, the triangle inequality, \eqref{eq:elastic_energy1} 
and \eqref{eq1:lem_iteration1} give the result:
\begin{align*}
&\|f_1-\chi_{1,\mu,\mu_2}(D)f_1\|^2_{L^2} + \|g(f_1)-\chi_{2,\mu,\mu_3}(D)g(f_1)\|_{L^2}^2\\
&\leq 2 \|g(f_1)-\chi_{2,\mu,\mu_2}(D)g(f_1)\|_{L^2}^2 + 2\|\chi_{2,\mu,\mu_2}(D)g(f_1)-\chi_{2,\mu,\mu_3}(D)g(f_1)\|^2_{L^2} \\
& \quad + \|f_1-\chi_{1,\mu,\mu_2}(D)f_1\|^2_{L^2}\\
&\leq 2C(\mu^{-2}\delta + \mu_2^{-1}\beta) + 2\frac{C_0 }{\gamma^{2d}} \max\big\{\big(\delta \mu^{-2} + \mu_2^{-1}\beta\big)^{1-\gamma},\delta \mu^{-2} + \mu_2^{-1}\beta\big\}&
\\
&\leq 4\frac{C_0 }{\gamma^{2d}} \max\big\{\big(\delta \mu^{-2} + \mu_2^{-1}\beta\big)^{1-\gamma},\delta \mu^{-2} + \mu_2^{-1}\beta\big\}
\end{align*}
using that $C_0\ge C$.
\end{proof}

Next, we iterate this and thus obtain that $f_1$ and $f_2$ can always be approximated by functions with smaller and smaller support in Fourier space (see Figure \ref{fig:cones3}).

\begin{lem}\label{lem:induction}
Let $f_1,f_2,g, h$ and $C_0>0$ be as in the statement of Proposition \ref{prop:H-1aim} and let
\begin{equation}
\mu_m:=(\sqrt{2}d)^m \epsilon^{-1+m\alpha}.
\end{equation}
Then, for every $m\in\N$ there holds
\begin{multline}
\|f_1-\chi_{1,\mu,\mu_{m_e}}(D)f_1\|_{L^2}^2+\|f_2-\chi_{2,\mu,\mu_{m_o}}(D)f_2\|_{L^2}^2 \\
\le \Big(\frac{4 C_0}{\gamma^{2d}}\Big)^{m} \max\big\{\big(\mu^{-2}\delta+\mu_2^{-1}\beta\big)^{(1-\gamma)^m},\mu^{-2}\delta+\mu_2^{-1}\beta\big\},
\end{multline}
where $m_e=2\big\lfloor\frac{m+2}{2}\big\rfloor$ and $m_o=2\big\lfloor\frac{m+1}{2}\big\rfloor+1$ are respectively the lower even and odd parts of $m+2$.
\end{lem}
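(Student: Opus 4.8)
The plan is to prove the statement by induction on $m$. The base case $m=1$ is precisely Lemma \ref{lem:iteration1}: using $f_2=g(f_1)$ it reads $\|f_1-\chi_{1,\mu,\mu_2}(D)f_1\|_{L^2}^2+\|f_2-\chi_{2,\mu,\mu_3}(D)f_2\|_{L^2}^2\le\tfrac{4C_0}{\gamma^{2d}}\max\{q^{1-\gamma},q\}$ with $q:=\mu^{-2}\delta+\mu_2^{-1}\beta$, and since $1_e=2$, $1_o=3$ and the radius $\sqrt2 d\,\epsilon^{-1+3\alpha}$ used there is dominated by $(\sqrt2 d)^3\epsilon^{-1+3\alpha}$, this is the assertion for $m=1$. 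For the inductive step from level $m$ to level $m+1$, note that the two current radii are $\{\mu_{m+1},\mu_{m+2}\}$, assigned to $f_1$ and $f_2$ according to the parity of $m$, and that exactly one of the two states — the one carrying the coarser radius $\mu_{m+1}$ — has its radius reduced (to $\mu_{m+3}$), the other keeping $\mu_{m+2}$. By the symmetry $f_1\leftrightarrow f_2$, $g\leftrightarrow h$ it suffices to treat the step in which $f_1$ is improved; thus the induction hypothesis is $\|f_1-\chi_{1,\mu,\mu_{m+1}}(D)f_1\|_{L^2}^2\le E_m$ and $\|f_2-\chi_{2,\mu,\mu_{m+2}}(D)f_2\|_{L^2}^2\le E_m$, where $E_m:=\big(\tfrac{4C_0}{\gamma^{2d}}\big)^m\max\{q^{(1-\gamma)^m},q\}$, and one must show $\|f_1-\chi_{1,\mu,\mu_{m+3}}(D)f_1\|_{L^2}^2+\|f_2-\chi_{2,\mu,\mu_{m+2}}(D)f_2\|_{L^2}^2\le E_{m+1}$.

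The core of the step transfers the finer localisation of $f_2$ onto $f_1$ through $f_1=h(f_2)$, mimicking the proof of Lemma \ref{lem:iteration1}. First I would put $w:=h\big(\chi_{2,\mu,\mu_{m+2}}(D)f_2\big)$ and, applying Lemma \ref{lem:poly_estimate} with cutoff radius $\mu_{m+2}$, obtain $\|f_1-w\|_{L^2}\le\tfrac{C'}{\gamma^d}\|f_2-\chi_{2,\mu,\mu_{m+2}}(D)f_2\|_{L^2}^{1-\gamma}\le\tfrac{C'}{\gamma^d}E_m^{(1-\gamma)/2}$. Next I would use that $\chi_{2,\mu,\mu_{m+2}}(D)f_2$ is Fourier supported in $C_{2,\mu,\mu_{m+2}}$, on which $|k_2|\le\mu|k|\le\mu\mu_{m+2}$, so that — since $h$ has degree $\le d$ and hence $\widehat w$ is, up to a constant, a $d$-fold self-convolution of $\widehat{\chi_{2,\mu,\mu_{m+2}}(D)f_2}$ — the Fourier support of $w$ lies in the slab $\{|k_2|\le d\mu\mu_{m+2}\}$. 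Letting $\chi^{(m)}$ be the (sharp) characteristic function of the complementary slab $\{|k_2|>d\mu\mu_{m+2}\}$, the analogue of the multiplier $\chi_{1,\epsilon}$ in Lemma \ref{lem:iteration1}, one has $\chi^{(m)}(D)w=0$, whence $\chi^{(m)}(D)\chi_{1,\mu,\mu_{m+1}}(D)f_1=\chi^{(m)}(D)\chi_{1,\mu,\mu_{m+1}}(D)(f_1-w)$ and $\|\chi^{(m)}(D)\chi_{1,\mu,\mu_{m+1}}(D)f_1\|_{L^2}\lesssim\|f_1-w\|_{L^2}$. Since the difference of the smoothed radial cutoffs defining $\chi_{1,\mu,\mu_{m+1}}$ and $\chi_{1,\mu,\mu_{m+3}}$ is supported where $|k|\gtrsim\mu_{m+3}$ (inside the common, slightly thickened cone $|k_1|\le C\mu|k|$, so there $|k_2|\ge|k|/\sqrt2\gtrsim\mu_{m+3}/\sqrt2$), the identity $\mu_{m+3}=\sqrt2 d\,\mu\,\mu_{m+2}$ — which is exactly how the radii $\mu_m=(\sqrt2 d)^m\epsilon^{-1+m\alpha}$ were tuned — yields $|\chi_{1,\mu,\mu_{m+1}}-\chi_{1,\mu,\mu_{m+3}}|\le\chi^{(m)}\chi_{1,\mu,\mu_{m+1}}$, as in $|\chi_{2,\mu,\mu_2}-\chi_{2,\mu,\mu_3}|\le\chi_{1,\epsilon}\chi_{2,\mu,\mu_2}$. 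Combining with the induction hypothesis and the triangle inequality then gives $\|f_1-\chi_{1,\mu,\mu_{m+3}}(D)f_1\|_{L^2}^2\lesssim E_m+\gamma^{-2d}E_m^{1-\gamma}$, while the $f_2$-term is inherited unchanged from level $m$.

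It remains to absorb the constants. Using $\tfrac{4C_0}{\gamma^{2d}}\ge1$, the elementary inequality $\max\{(Ax)^{1-\gamma},Ax\}\le A\max\{x^{1-\gamma},x\}$ (valid for $A\ge1$, $x\ge0$, $\gamma\in(0,1)$), the bound $\big(\max\{q^{(1-\gamma)^m},q\}\big)^{1-\gamma}\le\max\{q^{(1-\gamma)^{m+1}},q\}$ (checked by distinguishing $q\le1$ and $q>1$), and the resulting estimates $E_m\le\tfrac{\gamma^{2d}}{4C_0}E_{m+1}$, $E_m^{1-\gamma}\le\tfrac{\gamma^{2d}}{4C_0}E_{m+1}$, one sees that the sum of the two terms at level $m+1$ is $\le\big(\tfrac{4C_0}{\gamma^{2d}}\big)^{m+1}\max\{q^{(1-\gamma)^{m+1}},q\}=E_{m+1}$ provided $C_0$ is taken large enough (for instance $C_0\ge 2C+2C'^2+2$ as in Corollary \ref{cor:poly_estimate}); this closes the induction. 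The two genuinely delicate points, where care is needed, are: (i) making rigorous the passage ``$\widehat w$ supported in a slab $\Rightarrow$ the annular difference $\chi_{1,\mu,\mu_{m+1}}-\chi_{1,\mu,\mu_{m+3}}$ is controlled by $\chi^{(m)}\chi_{1,\mu,\mu_{m+1}}$'' in the presence of the \emph{smoothed} cone multipliers and their slight thickenings — this is precisely why the statement is formulated with the non-optimal radius $(\sqrt2 d)^m$, the extra geometric room serving to absorb the thickening constants; and (ii) checking that $\mu_m$ is decreasing in $m$ for $\epsilon$ small, i.e. $\mu_{m+2}/\mu_m=(\sqrt2 d)^2\epsilon^{2\alpha}<1$, which is needed so that the new cone is genuinely contained in the old one — and this is the only place where the explicit choices $\mu=\epsilon^\alpha$, $\mu_2=\epsilon^{-1+2\alpha}$ from \eqref{eq:parameters} and the smallness of $\epsilon$ enter.
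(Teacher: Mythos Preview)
Your proposal is correct and follows essentially the same route as the paper: induction with base case Lemma \ref{lem:iteration1}, the Fourier-support/Minkowski-sum argument transferring the finer localisation of one function to the other through the polynomial relation, Lemma \ref{lem:poly_estimate} to handle the nonlinearity, and the same constant-absorption bookkeeping using $C_0\ge 2C+2C'^2+2$. The only differences are cosmetic --- you run the induction $m\rightsquigarrow m+1$ with $m$ odd while the paper does $(m-1)\rightsquigarrow m$ with $m$ even, and you phrase the geometric inclusion via $\mu_{m+3}=\sqrt2 d\,\mu\,\mu_{m+2}$ whereas the paper uses the shifted identity $\mu_{m+2}=\sqrt2 d\,\mu\,\mu_{m+1}$ --- but the arguments are otherwise identical.
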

\begin{proof}
We reason by induction.
The induction basis is provided by Lemma \ref{lem:iteration1}.

Without loss of generality we may assume $m\in2\N$, thus $m_e=m+2$, $m_o=m+1$ and also $(m-1)_e=m$, $(m-1)_o=m+1$.
Assume the inductive hypothesis
\begin{multline}\label{eq:lem_induction1}
\|f_1-\chi_{1,\mu,\mu_m}(D)f_1\|_{L^2}^2+\|f_2-\chi_{2,\mu,\mu_{m+1}}(D)f_2\|_{L^2}^2 \\
\le \Big(\frac{4 C_0}{\gamma^{2d}}\Big)^{m-1}\max\big\{\big(\mu^{-2}\delta+\mu_2^{-1}\beta\big)^{(1-\gamma)^{m-1}},\mu^{-2}\delta+\mu_2^{-1}\beta\big\}
\end{multline}
to hold true. We now show that the statement remains valid for $(m-1) \rightsquigarrow m $.

\emph{Step 1.}
Here, by the triangle inequality, the fact that $f_1 =h (f_2)$ and \eqref{eq:lem_induction1} we get
\begin{equation}\label{eq:lem_induction2}
\begin{split}
&\|h(f_2)-\chi_{1,\mu,\mu_{m+2}}(D)h(f_2)\|_{L^2}^2+\|f_2-\chi_{2,\mu,\mu_{m+1}}(D)f_2\|_{L^2}^2 \\
&\quad \le 2\|h(f_2)-\chi_{1,\mu,\mu_m}(D)h(f_2)\|_{L^2}^2+\|f_2-\chi_{2,\mu,\mu_{m+1}}(D)f_2\|_{L^2}^2 \\
&\quad\qquad +2\|\chi_{1,\mu,\mu_m}(D)h(f_2)-\chi_{1,\mu,\mu_{m+2}}(D)h(f_2)\|_{L^2}^2 \\
&\quad \le 2\Big(\frac{4 C_0}{\gamma^{2d}}\Big)^{m-1}\max\big\{\big(\delta \mu^{-2} + \mu_2^{-1}\beta\big)^{(1-\gamma)^{m-1}},\delta \mu^{-2} + \mu_2^{-1}\beta\big\} \\
&\quad\qquad +2\|\chi_{1,\mu,\mu_m}(D)h(f_2)-\chi_{1,\mu,\mu_{m+2}}(D)h(f_2)\|_{L^2}^2.
\end{split}
\end{equation}

\emph{Step 2.}
We now reason as similarly as in the proof of Lemma \ref{lem:iteration1}.
From
\begin{align*}
\max_{k\in C_{2,\mu,\mu_{m+1}}}|k_2|=\mu_{m+1}\mu
\end{align*}
we infer
\begin{align}
\label{eq:geo}
\F\big(h(\chi_{2,\mu,\mu_{m+1}}(D)f_2)\big)=0 \quad \text{for } |k_2|>d\mu_{m+1}\mu.
\end{align}
Let $\chi_{2,\epsilon}$ denote the characteristic function of $\{k\in\R^2 : |k_2|>\mu_{m+1}\mu\}$.
Thus, from the fact that $|\chi_{1,\mu,\mu_m}-\chi_{1,\mu,\mu_{m+2}}|\le\chi_{2,\epsilon}\chi_{1,\mu,\mu_m}$ and recalling \eqref{eq:geo}, we obtain
\begin{align*}
&\|\chi_{1,\mu,\mu_m}(D)h(f_2)-\chi_{1,\mu,\mu_{m+2}}(D)h(f_2)\|_{L^2}^2 \\
& \quad \le \|\chi_{2,\epsilon}(D)\chi_{1,\mu,\mu_m}(D)h(f_2)\|_{L^2}^2 \\
& \quad \le \|\chi_{2,\epsilon}(D)\big(\chi_{1,\mu,\mu_m}(D)h(f_2)-h(\chi_{2,\mu,\mu_{m+1}}(D)f_2)\big)\|_{L^2}^2 \\
& \quad \le \|\chi_{1,\mu,\mu_m}(D)h(f_2)-h(\chi_{2,\mu,\mu_{m+1}}(D)f_2)\|_{L^2}^2.
\end{align*}
Thus, by the triangle inequality
\begin{multline*}
\|\chi_{1,\mu,\mu_m}(D)h(f_2)-\chi_{1,\mu,\mu_{m+2}}(D)h(f_2)\|_{L^2}^2 \\
\le 2 \|\chi_{1,\mu,\mu_m}(D)h(f_2)-h(f_2)\|_{L^2}^2+2\|h(f_2)-h(\chi_{2,\mu,\mu_{m+1}}(D)f_2)\|_{L^2}^2.
\end{multline*}
We control the first term on the right-hand-side above by means of the inductive hypothesis \eqref{eq:lem_induction1} and the second by \eqref{eq:Lip2} 
and again \eqref{eq:lem_induction1}, that is
\begin{align*}
& \|h(f_2)-h(\chi_{2,\mu,\mu_{m+1}}(D)f_2)\|_{L^2}^2 \le \frac{C'^2}{\gamma^{2d}}\big(\|f_2-\chi_{2,\mu,\mu_{m+1}}(D)f_2\|_{L^2}^2\big)^{1-\gamma} \\
& \quad \le \frac{C'^2}{\gamma^{2d}}\Big(\frac{4C_0}{\gamma^{2d}}\Big)^{m-1}\max\big\{\big(\delta \mu^{-2} + \mu_2^{-1}\beta\big)^{(1-\gamma)^m},\big(\delta \mu^{-2} + \mu_2^{-1}\beta\big)^{1-\gamma}\big\}.
\end{align*}
Hence, recalling that $C_0\ge2+2C'^2$, we infer
\begin{align*}
& \|\chi_{1,\mu,\mu_m}(D)h(f_2)-\chi_{1,\mu,\mu_{m+2}}(D)h(f_2)\|_{L^2}^2 \\
& \quad \le 2 \Big(\frac{4C_0}{\gamma^{2d}}\Big)^{m-1}\max\big\{\big(\delta \mu^{-2} + \mu_2^{-1}\beta\big)^{(1-\gamma)^{m-1}},\delta \mu^{-2} + \mu_2^{-1}\beta\big\} \\
& \qquad + 2\frac{C'^2}{\gamma^{2d}}\Big(\frac{4C_0}{\gamma^{2d}}\Big)^{m-1}\max\big\{\big(\delta \mu^{-2} + \mu_2^{-1}\beta\big)^{(1-\gamma)^m},\big(\delta \mu^{-2} + \mu_2^{-1}\beta\big)^{1-\gamma}\big\} \\
& \quad \le \frac{1}{4}\Big(\frac{4C_0}{\gamma^{2d}}\Big)^m\max\big\{\big(\delta \mu^{-2} + \mu_2^{-1}\beta\big)^{(1-\gamma)^m},\big(\delta \mu^{-2} + \mu_2^{-1}\beta\big)^{1-\gamma}\big\},
\end{align*}
which combined with \eqref{eq:lem_induction2} gives the result.
\end{proof}

\subsection{Proof of Proposition \ref{prop:H-1aim}}
\label{sec:chain_proof}
In this section, we conclude the proof of Proposition \ref{prop:H-1aim} by combining all the bounds from Sections \ref{sec:lower1}-\ref{sec:lower_bootstrap}.

\begin{proof}[Proof of Proposition \ref{prop:H-1aim}]
Consider $m\in2\N$.
From Lemma \ref{lem:induction} we deduce
\begin{equation}\label{eq:proof1}
\|f_1-\chi_{1,\mu,\mu_{m+2}}(D)f_1\|_{L^2}^2 \le \Big(\frac{4 C_0}{\gamma^{2d}}\Big)^m\max\big\{\big(\mu^{-2}\delta+\mu_2^{-1}\beta\big)^{(1-\gamma)^m},\mu^{-2}\delta+\mu_2^{-1}\beta\big\}.
\end{equation}
We first identify the number of iterations $m$ such that $\mu_{m+2}<1$, so that the left-hand-side of \eqref{eq:proof1} reduces to
$$
\sum_{k\neq(0,0)}|\hat f_1(k)|^2=\|f_1-c_0\|_{L^2}^2
$$
with $c_0$ the mean of $f_1$.
The condition $\mu_{m+2}<1$ corresponds to
$$
(m+2) \log(\sqrt{2}d)+(-1+(m+2)\alpha)\log(\epsilon)<0.
$$
This yields $m+2>\frac{1}{\alpha}$ which is satisfied \emph{e.g.} by $m=2\big\lfloor\frac{1}{2\alpha}\big\rfloor$.
For such a choice of $m$ we arrive at
$$
\|f_1-c_0\|_{L^2}^2 \le \Big(\frac{4 C_0}{\gamma^{2d}}\Big)^\frac{1}{\alpha} \max\Big\{\big(\mu^{-2}\delta+\mu_2^{-1}\beta\big)^{(1-\gamma)^\frac{1}{\alpha}},\mu^{-2}\delta+\mu_2^{-1}\beta\Big\}.
$$
We now take $\gamma=\alpha^2$.
Since $\alpha$ is a small parameter (to be determined) $\frac{1}{2}<(1-\alpha^2)^\frac{1}{\alpha}<1$.
Thus, recalling the definition of $\mu:= \epsilon^{\alpha}$ and $\mu_2 = \epsilon^{-1+2\alpha}$, we get
\begin{equation}\label{eq:proof2}
\|f_1-c_0\|_{L^2}^2 \le \Big(\frac{4C_0}{\alpha^{4d}}\Big)^\frac{1}{\alpha} \epsilon^{-2\alpha}\max\big\{\big(\delta+\epsilon\beta\big)^\frac{1}{2},\delta+\epsilon\beta\big\}.
\end{equation}
\end{proof}

\begin{rmk}
We emphasize that there is no particular reason to choose the exponent $\frac{1}{2}$ in the exponent of the right hand side of \eqref{eq:proof2}. It would have been possible to produce any power in $(0,1)$. As this does not play a major role in our estimates below, we have simply chosen this power for convenience.
\end{rmk}

We can further improve the right-hand-side of \eqref{eq:proof2} by noticing that for some constant $c>0$ depending on $d$
$$
\big(\alpha^{-4d}\big)^\frac{1}{\alpha}\lesssim\exp\big(c\log(\alpha^{-1})\alpha^{-1}\big)\le e^{c\alpha^{-1-\nu}}
$$
for every $\nu>0$, which gives
$$
\|f_1-c_0\|_{L^2}^2\lesssim (4C_0e^c)^{\alpha^{-1-\nu}}\epsilon^{-2\alpha}\max\{(\delta+\epsilon\beta)^\frac{1}{2},\delta+\epsilon\beta\}.
$$
Optimizing in $\alpha$, we deduce $(4C_0e^c)^{\alpha^{-1-\nu}}\sim\epsilon^{-2\alpha}$, that is
$$
\alpha\sim|\log(\epsilon)|^{-\frac{1}{2+\nu}}.
$$
We eventually obtain
\begin{equation}\label{eq:proof-improved}
\|f_1-c_0\|_{L^2}^2\lesssim \exp(C|\log(\epsilon)|^{\frac{1}{2}+\nu'})\max\{(\delta+\epsilon\beta)^\frac{1}{2},\delta+\epsilon\beta\},
\end{equation}
for every $\nu'>0$.

\subsection{Application to Tartar's square and proof of the lower bound from Theorem \ref{thm:main}}
\label{sec:lower_proof}

We now consider the case $f_1=\chi_{2,2}$ and $f_2=\chi_{1,1}$, where the phase indicators $\chi_j$ are defined as in \eqref{char-functions}. Using the lower bound from Proposition \ref{prop:H-1aim} we derive the following lower bound for the elastic energy, which, in particular, yields the proof of the lower bound in Theorem \ref{thm:main} for the periodic setting. We refer to the argument below which allows us to then also transfer this to the case of affine boundary conditions.

\begin{thm}
\label{thm:lower_bound}
Let $E_\epsilon$ be as in \eqref{eq:e} and $r_\nu(\epsilon)$ as in \eqref{eq:scalings}. Let $F\in \mathcal{K}^{qc}$. Assume that $E_{\epsilon}(\chi) \leq 1$.
Then, for every $\nu\in(0,1)$ and for every $\chi_j$ for $j=1,\dots,4$ as in \eqref{char-functions} there holds that
$$
r_\nu(\epsilon) \dist^2(F,\mathcal{K}) \lesssim E_\epsilon(\chi)^\frac{1}{2} 
$$
for every $\epsilon>0$ small enough.
\end{thm}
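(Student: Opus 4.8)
The plan is to reduce the statement to Proposition~\ref{prop:H-1aim}, applied to $f_1=\chi_{2,2}$ and $f_2=\chi_{1,1}$, and then to combine its conclusion with an elementary geometric inequality bounding $\dist^2(F,\mathcal{K})$ by the mean-zero oscillation of $\chi$ together with the elastic energy. First I would record the algebraic input: the pair $(\chi_{1,1}(x),\chi_{2,2}(x))$ takes values in the four pairs $\{(-1,-3),(-3,1),(1,3),(3,-1)\}$ of diagonal entries of $A_1,\dots,A_4$, so $\chi_{2,2}(x)$ determines $\chi_{1,1}(x)$ and vice versa; interpolating these four points by cubics produces polynomials $g,h$ with $g(0)=h(0)=0$ (one may take $g(t)=\frac{5}{12}t^3-\frac{41}{12}t$, $h=-g$, as in the remark after Proposition~\ref{prop:H-1aim}) with $\chi_{1,1}=g(\chi_{2,2})$ and $\chi_{2,2}=h(\chi_{1,1})$ pointwise. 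Since $\chi_{1,1},\chi_{2,2}\in\{-3,-1,1,3\}$ a.e., both lie in $L^\infty\cap BV(\T^2)$ and are nowhere zero, so Proposition~\ref{prop:H-1aim} is applicable with $d=3$.

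Next I would feed in the energy controls. By Lemma~\ref{lem:e_el-old} and the inequality stated right after it, $\|\p_1\chi_{2,2}\|_{\dot H^{-1}}^2+\|\p_2\chi_{1,1}\|_{\dot H^{-1}}^2\le E_{el}^{\rm per}(\chi,F)\le E_\epsilon(\chi)=:\delta$; and expanding $\chi_{1,1},\chi_{2,2}$ in the $\chi_j$ and using subadditivity of the total variation gives $\|\nabla\chi_{2,2}\|_{TV}+\|\nabla\chi_{1,1}\|_{TV}\le 6E_{surf}(\chi)=:\beta$, so that $\delta+\epsilon\beta\le 7E_\epsilon(\chi)$. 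We may assume $E_\epsilon(\chi)\le\frac17$, since otherwise the claim is trivial ($\dist(F,\mathcal{K})$ being bounded uniformly on $\mathcal{K}^{qc}$ while $r_\nu(\epsilon)\le1$). Then the optimized consequence \eqref{eq:proof-improved} of Proposition~\ref{prop:H-1aim} yields, for every $\nu\in(0,1)$ and $\epsilon$ small,
$$
\|\chi_{2,2}-c_0\|_{L^2}^2\ \lesssim\ \exp\!\big(C|\log\epsilon|^{\frac12+\nu}\big)\,(\delta+\epsilon\beta)^{\frac12}\ \lesssim\ \exp\!\big(C|\log\epsilon|^{\frac12+\nu}\big)\,E_\epsilon(\chi)^{\frac12},
$$
where $c_0=\overline{\chi_{2,2}}$ is the mean. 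Moreover, since $g$ is Lipschitz on $[-3,3]$ (which contains the range of $\chi_{2,2}$ and the constant $c_0$) and the mean is the $L^2$-optimal constant approximation, $\|\chi_{1,1}-\overline{\chi_{1,1}}\|_{L^2}\le\|g(\chi_{2,2})-g(c_0)\|_{L^2}\le L\,\|\chi_{2,2}-c_0\|_{L^2}$, so the same bound holds for $\|\chi_{1,1}-\overline{\chi_{1,1}}\|_{L^2}^2$.

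Finally, for the geometric step, since $\chi(x)\in\mathcal{K}$ a.e.\ and the constant part is $L^2$-orthogonal to the mean-zero part (using $|\T^2|=1$),
$$
\dist^2(F,\mathcal{K})\ \le\ \int_{\T^2}|F-\chi|^2\,dx\ =\ |F-\overline{\chi}|^2+\|\chi_{1,1}-\overline{\chi_{1,1}}\|_{L^2}^2+\|\chi_{2,2}-\overline{\chi_{2,2}}\|_{L^2}^2,
$$
and $|F-\overline{\chi}|^2\le E_{el}^{\rm per}(\chi,F)\le E_\epsilon(\chi)$ by Lemma~\ref{lem:e_el-old}. Combining the three bounds and absorbing $E_\epsilon(\chi)\le E_\epsilon(\chi)^{1/2}$ gives $\dist^2(F,\mathcal{K})\lesssim\exp(C|\log\epsilon|^{\frac12+\nu})\,E_\epsilon(\chi)^{1/2}$, i.e.\ $r_\nu(\epsilon)\dist^2(F,\mathcal{K})\lesssim E_\epsilon(\chi)^{1/2}$ with $r_\nu$ as in \eqref{eq:scalings} (the constant $c$ there being the $C$ produced by \eqref{eq:proof-improved}). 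The only genuinely hard ingredient here is the bootstrap/chain-rule estimate already established in Proposition~\ref{prop:H-1aim}; in the present reduction the points requiring care are isolating the trivial large-energy regime so that $\max\{(\delta+\epsilon\beta)^{1/2},\delta+\epsilon\beta\}\lesssim E_\epsilon(\chi)^{1/2}$, and checking that all implied constants ($C_0$, the Lipschitz constant $L$, and the constant $C$ in the exponent) depend only on universal data of the Tartar square, which holds since $\chi_{1,1},\chi_{2,2}$ are uniformly bounded and $g,h,d$ are fixed.
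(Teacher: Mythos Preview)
Your proof is correct and follows essentially the same approach as the paper: reduce to Proposition~\ref{prop:H-1aim} (via Lemma~\ref{lem:e_el-old} for the $\dot H^{-1}$ control and the surface energy for the $TV$ control), invoke the optimized bound \eqref{eq:proof-improved}, and combine with the mean/elastic-energy comparison $|F-\bar\chi|^2\le E_{el}^{\rm per}(\chi,F)$. Your packaging of the geometric step via $\dist^2(F,\mathcal{K})\le\int_{\T^2}|F-\chi|^2=|F-\bar\chi|^2+\|\chi-\bar\chi\|_{L^2}^2$ is a touch cleaner than the paper's componentwise estimate plus triangle inequality, and your Lipschitz argument transferring the bound from $\chi_{2,2}$ to $\chi_{1,1}$ makes explicit a step the paper leaves implicit in writing $\|\chi-\bar\chi\|_{L^2}^2$.
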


\begin{proof}
From Lemma \ref{lem:e_el-old} and the definition of surface energy \eqref{eq:e_surf}, the inequalities in \eqref{eq:H-1aim1} hold true with
$$
\delta=E_{el}^{\text{per}}(\chi,F) \quad\text{and}\quad \beta=E_{surf}(\chi).
$$
We set $r_\nu(\epsilon):=\exp(-C|\log(\epsilon)|^{\frac{1}{2}+\nu})$.
By Proposition \ref{prop:H-1aim} and the improved estimate \eqref{eq:proof-improved} we infer
$$
\|\chi-\bar{\chi}\|_{L^2}^2 \lesssim r_\nu(\epsilon)^{-1} \max\{ E_\epsilon^{\text{per}}(\chi,F)^\frac{1}{2},E_\epsilon^{\text{per}}(\chi,F)\},
$$
for every $\nu\in(0,1)$, where $\bar{\chi}$ is the mean of $\chi$. 

In order to conclude the argument, we seek to provide a bound on $\dist(\bar{\chi}, F)$. To this end, we invoke the boundary conditions and make use of the elastic energy bounds: For instance,
\begin{align*}
\left|\bar{\chi}_{1,1}-F_{11}\right|^2 
&\leq \left| \int\limits_{[0,1]^2} (\chi_{1,1}(x)- F_{11})dx \right|^2\\
& \leq 2\left| \int\limits_{[0,1]^2} (\p_1 u_1(x)- F_{11})dx  \right|^2 + 2\left| \int\limits_{[0,1]^2} (\chi_{1,1}(x)- \p_{1}u_1 )dx \right|^2
 \leq 4 E_{el}^{\text{per}}(\chi,F),
\end{align*}
where we have used that by the affine boundary conditions $\int\limits_{[0,1]^2} (\p_1 u_1(x)- F_{11})dx =0$. Arguing similarly for the $\chi_{2,2}$ component and invoking the triangle inequality, it follows that for any boundary datum $F \in \mathcal{K}^{qc} \subset \R^{2\times 2}$, we have 
\begin{align*}
\dist^2(F, \mathcal{K}) - 8 E_{el}^{\text{per}}(\chi,F) \lesssim r_{\nu}(\epsilon)^{-1} E_\epsilon^{\text{per}}(\chi,F)^\frac{1}{2},
\end{align*}
Multiplying this inequality with $r_{\nu}(\epsilon)$ and noting that for $\epsilon \in (0,1)$ and $E_{\epsilon}^{\text{per}}(\chi)\leq 1$ there exists $C>0$ such that $r_{\nu}(\epsilon)E_{el}^{\text{per}}(\chi,F) \leq C E_\epsilon^{\text{per}}(\chi,F)^\frac{1}{2} $, this implies the desired claim.
\end{proof}

Theorem \ref{thm:lower_bound} combined with Proposition \ref{prop:upp-bound} proves the main result of this paper, Theorem \ref{thm:main}, in the periodic setting.

Last but not least, we now also transfer the lower bound estimate to the case of affine boundary data:

\begin{proof}[Proof of the lower bound of Theorem \ref{thm:main} in the case of affine boundary conditions]
We first note that $\mathcal{A}^{\text{aff}} \subset \mathcal{A}^{\text{per}}$. Since $(L^{\infty}\cap BV)(\mathbb{T}^2) \subset (L^{\infty}\cap BV)((0,1)^2)$, this implies that for each $\chi \in (L^{\infty}\cap BV)(\mathbb{T}^2)$ it holds that
\begin{align}
\label{eq:min_fin}
E^{\text{per}}_{\epsilon}(\chi) \leq E^{\text{aff}}_{\epsilon}(\chi).
\end{align}
Recalling the trace theorem for $BV$ functions, we further note that any function in $(L^{\infty}\cap BV)((0,1)^2)$ can also be viewed as a function in $(L^{\infty}\cap BV)(\mathbb{T}^2)$ by periodic extension. Hence, \eqref{eq:min_fin} yields that
\begin{align*}
E_{\epsilon}^{\text{per}} \leq E_{\epsilon}^{\text{aff}}.
\end{align*}
Combining this with Theorem \ref{thm:lower_bound} then also concludes the proof of the lower bound estimate in Theorem \ref{thm:main} in the case of affine boundary conditions.
\end{proof}

\section*{Acknowledgements}
Both authors gratefully acknowledge funding by the Deutsche Forschungsgemeinschaft (DFG, German Research Foundation) through SPP 2256, project ID 441068247. 
A.R. is a member of the Heidelberg STRUCTURES Excellence Cluster, which is funded by the Deutsche Forschungsgemeinschaft (DFG, German Research Foundation) under Germany's Excellence Strategy EXC 2181/1 - 390900948.
Both authors would like to thank John Ball for bringing the articles \cite{W97, C99} to our attention.

\bibliography{citations1}
\bibliographystyle{alpha}

\end{document}